\title{$A_k$ singlarities of wave fronts.} % and Morin maps.
\date{October 25, 2008}
\theoremstyle{plain}
 \newtheorem{theorem}{Theorem}[section]
 \newtheorem{proposition}[theorem]{Proposition}
 \newtheorem{lemma}[theorem]{Lemma}
 \newtheorem{corollary}[theorem]{Corollary}
\theoremstyle{remark}
 \newtheorem{remark}[theorem]{Remark}
 \newtheorem*{remark*}{Remark}
 \newtheorem*{acknowledgements}{Acknowledgements}
 \newtheorem{example}[theorem]{Example}
\numberwithin{equation}{section}
\numberwithin{figure}{section}
\author{Kentaro Saji}
\address[Saji]{%
  Department of Mathematics,
  Faculty of Educaton,
  Gifu University,  Yanagido 1-1, Gifu 151-1193, Japan
}
\email{ksaji@gifu-u.ac.jp}
\author{Masaaki Umehara}
\address[Umehara]{%
   Department of Mathematics, Graduate School of Science,
   Osaka University,
   Toyonaka, Osaka 560-0043,
   Japan
}
\email{umehara@math.sci.osaka-u.ac.jp}
\author{Kotaro Yamada}
\address[Yamada]{%
   Faculty of Mathematics,
   Kyushu University,
   Higashi-ku, Fukuoka 812-8581, Japan%
}
\email{kotaro@math.kyushu-u.ac.jp}
\thanks{
 Kentaro Saji was supported by the
 Grant-in-Aid for Young Scientists
 (Start-up) No.~19840001, from the Japan Society for the Promotion of 
 Science.
 Masaaki Umehara and Kotaro Yamada were supported by 
 the Grant-in-Aid for 
 Scientific Research (A) No.~19204005 
 and Scientific Research (B) No.~14340024,
 respectively, from the Japan Society for the Promotion of Science.
}
\subjclass[2000]{%
 Primary 57R45;   % Singularities of differential maps
 Secondary 57R35,  % Differentiable mappingns
           53D12.
}
\dedicatory{Dedicated to Professor Yoshiaki Maeda on
the occasion of his sixtieth birthday.}
\newcommand{\R}{\boldsymbol{R}}
\newcommand{\C}{\boldsymbol{C}}
\newcommand{\K}{\boldsymbol{K}}
\newcommand{\Z}{\boldsymbol{Z}}
\newcommand{\vect}[1]{\boldsymbol{{#1}}}
\newcommand{\nuin}{\nu_{\operatorname{in}}}
\renewcommand{\phi}{\varphi}
\newcommand{\inner}[2]{%
   \left\langle{#1},{#2}\right\rangle^{}_{\mbox{\!\tiny $\K$}}}
\newcommand{\dimK}{\dim_{\K}}
\newcommand{\spann}[1]{%
  \operatorname{Span}_{\mbox{\!\tiny $\K$}}\!\!\left\{#1\right\}}
\newcommand{\Image}{\operatorname{Im}}
\renewcommand{\theenumi}{{\rm(\arabic{enumi})}}
\renewcommand{\labelenumi}{\theenumi\ }
\begin{document}
\maketitle
\begin{abstract}
In this paper, we discuss the recognition problem for $A_{k}$-type
singularities on wave fronts.
We give computable and simple criteria of these singularities, 
which will play a fundamental role in generalizing 
the authors' previous
work ``the geometry of fronts'' for surfaces.
The crucial point to prove our criteria for $A_k$-singularities
is to introduce a suitable parametrization of the singularities  called 
the ``$k$-th KRSUY-coordinates'' (see Section~\ref{sec:adopted}).
Using them, we can directly construct a versal unfolding for a given
singularity.
As an application,
we prove  that a given nondegenerate singular point $p$ on a real 
(resp.\ complex) hypersurface (as a wave front) in $\R^{n+1}$ 
(resp.\ $\C^{n+1}$) is differentiably 
(resp.\ holomorphically)
right-left equivalent to the  $A_{k+1}$-type singular point
if and only if the linear projection of the singular set around $p$ into
a generic hyperplane $\R^n$ (resp.\ $\C^{n}$)
is right-left equivalent to the $A_{k}$-type singular point in 
$\R^{n}$ (resp.\ $\C^{n}$).
Moreover, we show that the restriction of a
$C^\infty$-map $f:\R^n\to \R^n$ to its Morin singular
set gives a wave front consisting of only $A_k$-type singularities.
Furthermore, we shall give a relationship
between the normal curvature map and the zig-zag numbers 
(the Maslov indices) of wave fronts.
\end{abstract}

%%%%%%%%%%%%%%%%%%%%%%%%%%%%%%%%%%%%%%%%%%%%%%%%
\section{Introduction}
Throughout this paper, we denote by $\K$ the real number field $\R$ or
the complex number field $\C$.
Let $m$ and $n$ be positive integers.
A map $f\colon{}\K^m\to \K^n$ is called {\em $\K$-differentiable\/}
if it is a $C^\infty$-map when $\K=\R$, and is a holomorphic map when
$\K=\C$.
Let $\inner{~}{~}$ be the {\em $\K$-inner product\/} given by
\[
    \inner{X}{Y}=\sum_{j=1}^n x_j y_j
    \qquad
    \bigl(X=(x_1,\dots,x_n),~Y=(y_1,\dots,y_n)\in\K^n\bigr).
\]
Let $P^{n}(\K)$ be the $\K$-projective space and
$\K^{n+1}\setminus\{\vect 0\}\ni X \mapsto [X]\in P^{n}(\K)$
the canonical projection.
By using the above $\K$-inner product,
the $\K$-projective cotangent bundle $PT^*\K^{n+1}$
has the following identification
\[
    PT^*\K^{n+1}=\K^{n+1}\times P^{n}(\K),
\]
which has the canonical $\K$-contact structure.
Let $U\subset \K^{n}$ be a domain and 
\[
    L:=(f,[\nu]):U\longrightarrow \K^{n+1}\times P^{n}(\K)
\]
a Legendrian immersion, where 
$\nu$ is
a locally defined $\K$-differentiable map
into $\K^{n+1}\setminus\{\vect 0\}$ such that
\[
    \inner{df(\vect{u})}{\nu}=0\qquad (\vect{u}\in TU).
\]

In this situation, $f$ is called a {\em wave front\/} or {\em front},
and $\nu$ is called the {\em $\K$-normal vector field\/} of $f$.
If $\K=\C$,  $\inner{\nu}{\nu}$ 
might vanish.
A point $p\in U$ is called a {\em singular point\/} if the front $f$ is
not an immersion at $p$.

In this paper, we shall discuss the recognition problem for
$A_{k+1}$-type singularities on wave fronts.
These are fundamental singularities on wave fronts (see \cite{AGV}).
We give a simple and computable necessary and sufficient condition that
a given singular point $p$ on a hypersurface (as a wave front) in 
$\K^{n+1}$ is $\K$-right-left equivalent to the $A_{k+1}$-type singular 
point 
($k\leq n$; Theorem~\ref{thm:criteria}, 
Corollaries \ref{cor:actual}, \ref{cor:submanifold1} and
\ref{cor:submanifold2} 
in Section~\ref{sec:criteria}),
where two $\K$-differentiable map germs 
$f_j\colon{}(U_j,p_j)\to(\K^{n+1},p'_j)$ $(j=1,2)$ are
{\em $\K$-right-left equivalent\/} if there exist $\K$-diffeomorphism
germs 
$\psi\colon{}(U_1,p_1)\to (U_2,p_2)$ and 
$\Psi:(\K^{n+1},p'_1)\to(\K^{n+1},p'_2)$
such that
$\Psi\circ f_1=f_2\circ\psi$ holds.
Here, the $A_{k+1}$-type  singularity 
(or $A_{k+1}$-front singularity)
is a map germ defined by
\begin{equation}\label{eq:ak-def}
    X
      \longmapsto
    \left(
        (k+1)t^{k+2}+\sum_{j=2}^k(j-1)t^jx_j,
        -(k+2)t^{k+1}-\sum_{j=2}^kjt^{j-1}x_j,
        X_{1}
    \right)
\end{equation}
at the origin, where $X=(t,x_2,\dots,x_n)$, $X_{1}=(x_2,\dots,x_n)$.
The image of it coincides with the discriminant set 
$\{F=F_t=0\}\subset (\K^{n+1};u_0,\dots,u_n)$ 
of
the versal unfolding 
\begin{equation}\label{eq:versal}
    F(t,u_0,\dots,u_n):=t^{k+2}+u_{k} t^k+\dots +u_1 t +u_0.
\end{equation}
By definition, $A_1$-front singularities are regular points.
A $3/2$-cusp in a plane is an $A_2$-front singularity
and a swallowtail in $\R^3$ is an $A_3$-front singularity.

When $n=2$, useful criteria for cuspidal edges and swallowtails
are given in \cite{krsuy}. We shall give a generalization of
the criteria here.
The crucial point to prove our criteria for 
$A_{k+1}$-singularities
is to introduce  the ``$k$-th KRSUY-coordinates''
as a generalization of the coordinates for
cuspidal edges and swallowtails in \cite{krsuy}.
Using them, we can directly construct a versal unfolding
whose discriminant set coincides with the given singularity.  

As an application, when $\K=\R$, we show that
the restriction of a $C^\infty$-map $f:\R^{n+1}\to \R^{n+1}$ 
into its Morin singular set (see the appendix)
gives a wave front consisting of only $A_{k+1}$-type singularities 
$(k\leq n)$.
Moreover, in the final section,
we shall give a relationship
between the normal curvature map and the zig-zag numbers 
(the Maslov indices) of wave fronts.
%%%%%%%%%%%%%%%%%%%%%%%%%%%%%%%%%%%%%%%%%%%%%%%%%%%%%%%%%%%%%%%%%%%%%
\section{Criteria for $A_{k+1}$-front singularities}
\label{sec:criteria}

Let $U\subset (\K^n;x_1\dots,x_n)$ be a domain and
$f:U\to \K^{n+1}$ a front.
Since we will work on a local theory, we may assume that
the $\K$-normal vector field $\nu$ of $f$ is defined on $U$.
We define a $\K$-differentiable
function $\lambda$ on $U$ as the determinant
\begin{equation}\label{eq:lambda}
    \lambda:=\det(f_{x_1}^{} ,\ldots,f_{x_n}^{},\nu),
\end{equation}
where $f_{x_j}^{}:=\partial f/\partial x_j$
($j=1,2,\dots,n$). 
A point $p\in U$ is {\em $1$-singular\/}
(or {\em singular})
if $\lambda(p)=0$,  that is, $p$ is a singular point.
A point $p$ is {\em $1$-nondegenerate\/}
(or {\em non-degenerate}) if $p$ is $1$-singular and the exterior
derivative $d\lambda$ does not vanish at $p$.
The following assertion is obvious:
\begin{lemma}\label{lem:non-deg-indep}
 The definition of $1$-nondegeneracy is independent
 of the choice of local coordinate system $(x_1,\dots,x_n)$
 and the choice of a $\K$-normal vector field $\nu$.
 Moreover, if $p$ is $1$-nondegenerate, 
 the $\K$-linear map $df_p:T_pU\to \K^{n+1}$
 has a kernel of  $\K$-dimension exactly one.
\end{lemma}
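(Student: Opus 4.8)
The plan is to reduce all three assertions to one elementary observation: under a change of local coordinates or of the $\K$-normal vector field, the function $\lambda$ is replaced by $g\lambda$ for some nowhere-vanishing $\K$-differentiable function $g$, and the differential of $g\lambda$ at a zero of $\lambda$ equals $g\,d\lambda$. Granting this, for $\tilde\lambda=g\lambda$ one has $\tilde\lambda(p)=0\iff\lambda(p)=0$, and at such a point $d\tilde\lambda_p=g(p)\,d\lambda_p+\lambda(p)\,dg_p=g(p)\,d\lambda_p$, so $d\tilde\lambda_p\ne 0\iff d\lambda_p\ne 0$; hence both ``$1$-singular'' and ``$1$-nondegenerate'' are independent of the two choices. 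To exhibit the factor $g$ I would argue as follows. For a second coordinate system $(y_1,\dots,y_n)$ one has $f_{y_i}=\sum_j(\partial x_j/\partial y_i)f_{x_j}$, so expanding $\det(f_{y_1},\dots,f_{y_n},\nu)$ multilinearly in its first $n$ columns gives $J\lambda$ with $J=\det(\partial x_j/\partial y_i)$ nowhere zero. For a change of the $\K$-normal vector field, any two $\K$-normal vector fields of the front $f$ represent the same normal direction $[\nu]$ at each point, hence differ by a nowhere-vanishing $\K$-differentiable factor $a$ (locally $a=\tilde\nu_{i_0}/\nu_{i_0}$ for any index $i_0$ with $\nu_{i_0}\ne 0$), and then $\det(f_{x_1},\dots,f_{x_n},a\nu)=a\lambda$.

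For the statement about $df_p$, I would assume $p$ is $1$-nondegenerate. In particular $p$ is a singular point, so $df_p$ is not injective and $\dimK\ker df_p\ge 1$; it remains to exclude $\dimK\ker df_p\ge 2$. Suppose it held. By rank--nullity the subspace $W:=\spann{f_{x_1}(p),\dots,f_{x_n}(p)}\subset\K^{n+1}$ then satisfies $\dimK W\le n-2$. Writing $\lambda=\det(C_1,\dots,C_{n+1})$ with $C_j=f_{x_j}$ for $j\le n$ and $C_{n+1}=\nu$, the Leibniz rule for the multilinear map $\det$ expresses each $\partial\lambda/\partial x_k(p)$ as a sum of $n+1$ determinants, in each of which exactly one column is differentiated; hence at least $n-1$ of the $n$ undifferentiated columns are among $f_{x_1}(p),\dots,f_{x_n}(p)$ and so lie in $W$. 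Since $n-1>\dimK W$, those columns are linearly dependent, so each of these determinants vanishes; thus $d\lambda_p=0$, contradicting $1$-nondegeneracy. Therefore $\dimK\ker df_p=1$.

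I expect no genuine obstacle: the whole statement is elementary multilinear algebra, which is presumably why it is called obvious. The one place where a careless count could go wrong, and the main thing to verify, is the column bookkeeping in the preceding paragraph --- that after differentiating a single column of the $(n+1)\times(n+1)$ determinant the remaining $n$ columns still contain at least $n-1$ of the vectors $f_{x_j}(p)$, which is exactly what makes the rank bound $\dimK W\le n-2$ force $d\lambda_p=0$. A secondary minor point is the justification that two $\K$-normal vector fields of $f$ differ by a nowhere-zero scalar function even at singular points, which holds because they both represent the normal direction $[\nu]$ carried by the front.
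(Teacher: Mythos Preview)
Your argument is correct in all three parts. The paper itself gives no proof of this lemma---it is stated with the preface ``The following assertion is obvious'' and left at that---so there is nothing to compare against; your write-up simply supplies the routine multilinear-algebra details that the authors omitted.
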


If $p$ is 1-nondegenerate, the singular set of $f(=:f_0)$ 
\[
    S_1:=S(f)=\{q\in U\,;\, \lambda(q)=0\}
\]
is an embedded {\em $\K$-differentiable\/} hypersurface of $U$ near $p$.
We denote by $TS_1$ the $\K$-differentiable tangent bundle of 
$\K$-differentiable manifold $S_1$.
Then 
\[
    f_1:=f|_{S_1}:S_1\longrightarrow \K^{n+1}
\]
is a $\K$-differentiable  map.
Since $\dimK\ker(df_p)=1$,
we can take a sufficiently small neighborhood $V$ ($\subset U$)
of $p$ and 
non-zero $\K$-differentiable vector field $\eta$ on $S_1\cap V$ 
which belongs to the kernel of $df$, 
that is $df_q(\eta_q)=0$ for $q\in S_1\cap V$.
We call $\eta$ a {\em null vector field\/} of $f$.
Moreover, we can construct a $\K$-differentiable vector field 
$\tilde \eta$ on $V$ 
(called an {\em extended null vector field})
whose restriction
$\tilde \eta|_{S_1\cap V}$ on the singular set $S_1\cap V$ 
gives a null vector field.

\begin{remark}\label{rem:null}
 Since $\eta\in \ker(df)$ on $S_1$, we can write 
 the components of $\eta$ explicitly using
 determinants of $(n-1)$-submatrices of the Jacobian matrix of $f$.
 Then this explicit expression of $\eta$ 
 gives a $\K$-differentiable vector field 
 on a sufficiently small neighborhood of a singular point. 
 Thus, we get an explicit procedure
 to construct $\tilde \eta$.
 For example,
 let $f$ be a front
 $f=(f^1,f^2,f^3,f^4)\colon{}\K^3\to\K^4$
 such that $f^1_{x_1}f^2_{x_2}-f^1_{x_2}f^2_{x_1}\ne 0$.
 Then
 \[
    \tilde \eta=
    \left(
    \det\begin{pmatrix}
          f^1_{x_2} & f^1_{x_3} \\ 
          f^2_{x_2} & f^2_{x_3}
	\end{pmatrix},
   -\det\begin{pmatrix}
          f^1_{x_1} & f^1_{x_3} \\ 
          f^2_{x_1} & f^2_{x_3}
	\end{pmatrix},
    \det\begin{pmatrix}
          f^1_{x_1} & f^1_{x_2} \\
          f^2_{x_1} & f^2_{x_2}
	\end{pmatrix}
    \right)
 \]
 is an extended null vector field.
\end{remark}
Let $\lambda'\colon{}U\to\K$ and $f',f'':U\to\K^{n+1}$
be the derivatives $d\lambda(\tilde \eta)$,
$df(\tilde \eta)$ and $df'(\tilde \eta)$, respectively.
Then by definition, we have
\begin{equation}\label{eq:S1}
    S_1:=\{q\in U\,;\, f'(q)=\vect{0}\},
\end{equation}
where $\vect{0}=(0,\dots,0)$.
The following assertion holds:

\begin{lemma}\label{lem:zero}
 The singular set $S_2:=S(f_1)$ of $f_1\colon{}S_1\to \K^{n+1}$
 has the following expressions{\rm :}
 \begin{align*}
   S_2&= \{q\in S_1\,;\, \eta_q\in T_qS_1\}
       = \{q\in S_1\,;\, \lambda'(q)=0\}
       = \{q\in U\,;\, \lambda(q)=\lambda'(q)=0\}\\
      &= \{q\in S_1\,;\, f''(q)=\vect{0}\}
       = \{q\in U\,;\, f'(q)=f''(q)=\vect{0}\}.
 \end{align*}
\end{lemma}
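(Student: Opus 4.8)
The plan is to verify the chain of equalities one set-equality at a time, moving from left to right, and to reduce everything to the two "obvious" identities $S_1=\{\lambda=0\}=\{f'=\vect 0\}$ already recorded in the excerpt (the latter in~\eqref{eq:S1}).

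\textbf{Step 1: the first equality $S_2=\{q\in S_1\,;\,\eta_q\in T_qS_1\}$.}
Fix $q\in S_1$ and work in a coordinate chart of $S_1$ near $q$; since $p$ is $1$-nondegenerate, $\dimK\ker(df_q)=1$ by Lemma~\ref{lem:non-deg-indep}, so $\ker(df_q)=\spann{\eta_q}$. The differential $d(f_1)_q\colon T_qS_1\to\K^{n+1}$ is the restriction of $df_q$ to $T_qS_1\subset T_qU$. Hence $d(f_1)_q$ fails to be injective precisely when $T_qS_1$ meets $\ker(df_q)=\spann{\eta_q}$ nontrivially, i.e.\ precisely when $\eta_q\in T_qS_1$. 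This gives $q\in S_2\iff\eta_q\in T_qS_1$.

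\textbf{Step 2: rewriting $\eta_q\in T_qS_1$ as $\lambda'(q)=0$, and closing the first line.}
Since $S_1=\{\lambda=0\}$ near $p$, its tangent space at $q$ is $T_qS_1=\ker d\lambda_q$. Therefore $\eta_q\in T_qS_1\iff d\lambda_q(\eta_q)=0$; but $\tilde\eta$ extends $\eta$ and $\lambda'=d\lambda(\tilde\eta)$, so $d\lambda_q(\eta_q)=\lambda'(q)$ for $q\in S_1$. This yields $\{q\in S_1\,;\,\eta_q\in T_qS_1\}=\{q\in S_1\,;\,\lambda'(q)=0\}$, and intersecting with $S_1=\{\lambda=0\}$ gives $\{q\in U\,;\,\lambda(q)=\lambda'(q)=0\}$, completing the first line of displayed equalities. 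One small point to be careful about: $\lambda'(q)$ for $q\notin S_1$ depends on the choice of extension $\tilde\eta$, so the identity $d\lambda_q(\eta_q)=\lambda'(q)$ is only claimed on $S_1$, which is all that is needed.

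\textbf{Step 3: the $f''$ description, and the main obstacle.}
For the last line, recall $f'=df(\tilde\eta)$ and $f''=df'(\tilde\eta)$, and that $S_1=\{f'=\vect 0\}$ by~\eqref{eq:S1}. For $q\in S_1$ we have $f'_q=\vect 0$; differentiating, for a tangent vector $v\in T_qS_1$ the value $d(f')_q(v)$ depends only on $f'|_{S_1}\equiv\vect 0$, hence vanishes, so $d(f')_q$ annihilates $T_qS_1$ and is determined by its value on a complementary direction. Writing $\tilde\eta_q=(\text{component in }T_qS_1)+(\text{transverse component})$, one gets $f''(q)=d(f')_q(\tilde\eta_q)=(\text{transverse component})\cdot(\text{directional derivative of }f'\text{ off }S_1)$. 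The upshot is that, on $S_1$, $f''(q)=\vect 0$ exactly when $\eta_q$ is tangent to $S_1$ (the transverse component of $\tilde\eta_q$ is then zero) \emph{or} the off-$S_1$ derivative of $f'$ vanishes; one must rule out the second alternative. The cleanest route is the identity $f' = \lambda\,(\text{$\K$-differentiable vector-valued function})$ near $p$ — indeed $f'=df(\tilde\eta)$ vanishes exactly on $\{\lambda=0\}$ with the nondegenerate function $\lambda$, so by the division lemma $f'=\lambda\,g$ for some $\K$-differentiable $g\colon U\to\K^{n+1}$ with $g(p)\ne\vect 0$ (after possibly shrinking $V$). Then $f''=df'(\tilde\eta)=\lambda'\,g+\lambda\,dg(\tilde\eta)$, so on $S_1$ (where $\lambda=0$) we get $f''=\lambda'\,g$ with $g$ nonvanishing, whence $\{q\in S_1\,;\,f''(q)=\vect 0\}=\{q\in S_1\,;\,\lambda'(q)=0\}$. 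Intersecting with $S_1=\{f'=\vect 0\}$ gives the final expression $\{q\in U\,;\,f'(q)=f''(q)=\vect 0\}$. The main obstacle is precisely this last reduction: establishing that $g(p)\ne\vect 0$, i.e.\ that the division of $f'$ by $\lambda$ produces a nonvanishing vector at $p$. This follows because $f'=df(\tilde\eta)$ and, at a $1$-nondegenerate point, the "next" derivative in the null direction cannot degenerate simultaneously with $\lambda$ in a way that kills $g$; concretely, if $g(p)=\vect 0$ then $f'$ would vanish to second order along the null direction, contradicting that $d\lambda_p\ne\vect 0$ controls the simple zero of $f'$. I would spell this out using the explicit determinant formula for $\eta$ from Remark~\ref{rem:null} together with $\lambda=\det(f_{x_1},\dots,f_{x_n},\nu)$, which makes the relation between the vanishing orders of $f'$ and of $\lambda$ transparent.
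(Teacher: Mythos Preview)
Your Steps~1 and~2 are correct and match the paper's argument essentially verbatim.

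Step~3 diverges. The paper handles the two inclusions separately and more directly. For $S_2\subset\{q\in S_1:f''(q)=\vect 0\}$ it notes that $\eta_q\in T_qS_1$ and $f'|_{S_1}\equiv\vect 0$ together give $f''(q)=d(f')_q(\eta_q)=\vect 0$. For the reverse inclusion it picks coordinates $(z_1,\dots,z_n)$ with $\partial_{z_n}=\tilde\eta$, writes $\lambda=\phi\det(f_{z_1},\dots,f_{z_{n-1}},f',\nu)$ with $\phi=\det(\partial z_i/\partial x_j)$, and differentiates in $z_n$: at $q\in S_1$ every term containing $f'(q)=\vect 0$ drops out, leaving
\[
  \lambda'(q)=\phi(q)\,\det\bigl(f_{z_1},\dots,f_{z_{n-1}},f'',\nu\bigr)(q),
\]
so $f''(q)=\vect 0$ forces $\lambda'(q)=0$. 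No division lemma is invoked.

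Your route $f'=\lambda g$, $f''|_{S_1}=\lambda'g$ is legitimate and pleasantly symmetric (both inclusions at once), but the justification you offer for $g(p)\ne\vect 0$ is where the gap lies. The heuristic ``if $g(p)=\vect 0$ then $f'$ vanishes to second order \emph{along the null direction}, contradicting $d\lambda_p\ne 0$'' does not work as written: second-order vanishing in the null direction alone only says $f'(p)=f''(p)=\vect 0$, and that is perfectly compatible with $d\lambda_p\ne 0$---indeed it is exactly what happens at every genuine point of $S_2$. What $g(p)=\vect 0$ actually buys you, combined with $\lambda(p)=0$, is that the \emph{full} differential $df'_p=d\lambda_p\cdot g(p)+\lambda(p)\,dg_p$ vanishes. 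Feeding $(f')_{z_i}(p)=\vect 0$ into the determinant identity above then kills every partial $\lambda_{z_i}(p)$, i.e.\ $d\lambda_p=0$, and \emph{now} you have the contradiction. (Equivalently: dividing the identity $\lambda=\phi\lambda\det(f_{z_1},\dots,f_{z_{n-1}},g,\nu)$ by $\lambda$ off $S_1$ and passing to the limit shows $\phi\det(f_{z_1},\dots,f_{z_{n-1}},g,\nu)\equiv 1$, so $g$ never vanishes.) Either way you end up invoking precisely the determinant computation the paper uses; your closing pointer to it is on target, but the intermediate heuristic is not a proof and should be replaced by one of these two arguments.
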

\begin{proof}
 A point $q\in S_1$ belongs to $S_2$ if and only if there exists 
 $\xi\in T_qS_1\setminus\{\vect{0}\}$ such that
 $df_1(\xi)=0$.
 Since $\dimK\ker(df_q)=1$, the vector $\xi$ must be proportional to
 $\eta_q$.
 Thus we get
 $S_2=\{q\in S_1\,;\, \eta_q\in T_qS_1\}$.
 Since $S_1$ is a level set of $\lambda$,
 $\eta_q\in T_qS_1$ if and only 
 if $d\lambda(\eta_q)=\lambda'(q)=0$, 
 which proves the first equation. 
 By \eqref{eq:S1} and by
 $\eta\in TS_1$ on $S_2$, the derivative $f''=df'(\eta)$
 of $f'$ with respect to $\eta$ vanishes on $S_2$, 
 that is
 $S_2\subset \{q\in S_1\,;\, f''(q)=\vect{0}\}$. 
 Next we suppose $f''(q)=\vect 0$ ($q\in S_1$).
 Since $\lambda'=0$ on $S_2$,
 by taking a new coordinate system $(z_1,\dots,z_n)$ 
 on $U$ such that
 $\partial/\partial z_n=\tilde \eta$,
 we have 
 \[
   \lambda'(q)=\{\phi\det(f_{z_1},\dots,f_{z_{n-1}},f',\nu)\}'(q)
              =\phi(q)\det(f_{z_1},\dots,f_{z_{n-1}},f'',\nu)(q)
              =0,
 \]
 where
 $\phi:=\det(\partial z_i/\partial x_j)$ is the Jacobian.
 Consequently $q\in S_{2}$ holds,
 that is $S_2= \{q\in S_1\,;\, f''(q)=\vect{0}\}$.
\end{proof}

A $1$-nondegenerate singular point $p\in S_1$ is called 
{\em $2$-singular\/} if $p\in S_2$. 
Moreover, a $2$-singular point $p\in S_2$ is 
{\em $2$-nondegenerate\/} if $(d\lambda')_p\ne 0$ on $T_pS_1$.
In this case, $S_2$ is an embedded hypersurface of $S_1$ around  $p$ 
if it is $2$-nondegenerate.

Now, we define the $j$-singularity and the $j$-nondegeneracy
inductively ($2\leq j\leq n$):
Firstly, we give the following notations:
\begin{alignat*}{3}
    \lambda^{(0)}&:=\lambda,\qquad 
   &\lambda^{(1)}&:=\lambda',\qquad 
   &\lambda^{(l)}&:=d\lambda^{(l-1)}(\tilde \eta), \\
    f^{(0)}      &:=f,\qquad 
   &f^{(1)}      &:=f',\qquad 
   &f^{(l)}      &:=df^{(l-1)}(\tilde \eta),
\end{alignat*}
where $l=1,2,3,\dots$.
We fix a $(j-1)$-nondegenerate singular point $p$ of $f$.
Then the  $(j-1)$-st singular set 
\[
    S_{j-1}:=S(f_{j-2})
\]
is an embedded hypersurface of $S_{j-2}$ around $p$.
(Here we replace $U$ by a sufficiently small neighborhood of $p$
at each induction step if necessary. 
In particular, we may assume $S_0:=U$.)
We set
\[
       f_{j-1}:=f|_{S_{j-1}}:S_{j-1}\longrightarrow \K^{n+1}.
\]
As in the proof of  Lemma~\ref{lem:zero}, by an inductive use of
the identity
\begin{align*}
    \lambda^{(j-1)}(q)
   &=
    \{\phi\det(f_{z_1},\dots,f_{z_{n-1}},f',\nu)\}^{(j-1)}(q)
   \\
    &=
    \phi(q)\det(f_{z_1},\dots,f_{z_{n-1}},f^{(j)},\nu)(q),
\end{align*}
$S_j:=S(f_{j-1})$ satisfies
\begin{align}\label{eq:lambda-eta}
    S_j &=
           \{q\in S_{j-1}\,;\, \eta_q\in T_qS_{j-1}\}
    \\
        &=
           \{q\in S_{j-1}\,;\, \lambda^{(j-1)}(q)=0\}
         = \{q\in U\,;\, \lambda(q)=\dots =\lambda^{(j-1)}(q)=0\} 
    \nonumber \\
        &=
           \{q\in S_{j-1}\,;\,  f^{(j)}(q)=\vect{0}\}
         = \{q\in U\,;\, f'(q)=\dots=f^{(j)}(q)=\vect{0}\}.
    \nonumber
\end{align}
Each point of $S_{j}$ is called a {\it $j$-singular point}.
A point $q\in S_j$ is called {\em $j$-nondegenerate\/} if 
$(d\lambda^{(j-1)})_q\ne 0$ on $T_qS_{j-1}$, 
that is 
\begin{equation}\label{eq:not}
 T_qS_{j-1}\not \subset \ker(d\lambda^{(j-1)})_q.
\end{equation}
These definitions do not depend on the choice of a coordinate
system $(x_1,\dots,x_n)$ of $U$.
Under these notations, a criterion for $A_{k+1}$-front singularities
is  stated as follows:
\begin{theorem}\label{thm:criteria}
 Let $f:(U,p)\to(\K^{n+1},f(p))$  be a front, 
 where $U$ is a domain in $\K^n$.
 Then $f$ at\/ $p$ is $\K$-right-left equivalent to the
 $A_{k+1}$-front singularity if and only if $p$ is
 $k$-nondegenerate but is not $(k+1)$-singular $(k\leq n)$. 
\end{theorem}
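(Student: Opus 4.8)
The plan is to prove both implications by reducing everything to a normal form along the induction that produced $S_1 \supset S_2 \supset \dots \supset S_n$. First I would treat the "only if" direction, which is the easier half: assume $f$ at $p$ is $\K$-right-left equivalent to the $A_{k+1}$-front singularity \eqref{eq:ak-def}. Right-left equivalence preserves the singular set and, because composing with a diffeomorphism on the source scales $\lambda$ by a nowhere-zero Jacobian factor and composing with a diffeomorphism on the target changes $\nu$ and hence $\lambda$ by another nowhere-zero factor, it preserves $j$-singularity and $j$-nondegeneracy for every $j$ (this is the content of Lemma~\ref{lem:non-deg-indep} together with the coordinate-independence remarks made after Lemma~\ref{lem:zero}). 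So it suffices to compute, for the explicit germ \eqref{eq:ak-def} with its obvious normal vector field, the functions $\lambda^{(0)}, \lambda^{(1)}, \dots$ along the $t$-axis (which is the null direction $\eta = \partial/\partial t$): one checks directly that $\lambda$ vanishes to order exactly $k$ in $t$ along the singular set, so that $\lambda = \lambda^{(1)} = \dots = \lambda^{(k-1)} = 0$ but $\lambda^{(k)} \neq 0$ at the origin, while the nondegeneracy conditions \eqref{eq:not} at each intermediate stage follow from the fact that $S_j$ for this model is exactly $\{t = x_{j+1} = \dots = x_k = 0\}$, a smooth submanifold of the expected codimension. This makes $p$ $k$-nondegenerate but not $(k+1)$-singular.

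The substantive direction is "if". Here I would argue by building the KRSUY-coordinates advertised in the introduction. Assume $p$ is $k$-nondegenerate but not $(k+1)$-singular. By the inductive construction in the excerpt, $S_1 \supset \dots \supset S_k$ is a flag of embedded submanifolds near $p$, with $\operatorname{codim} S_j = j$, and the extended null vector field $\tilde\eta$ is transverse to $S_k$ at $p$ (this transversality is exactly $k$-nondegeneracy, rephrased via \eqref{eq:lambda-eta}). The goal is to choose coordinates $(t, x_2, \dots, x_n)$ on $U$ centered at $p$ so that $\tilde\eta = \partial/\partial t$, so that $S_j = \{t = x_2 = \dots = \text{(appropriate block)}\}$, and so that the Taylor expansion of $f$ in $t$ is forced, order by order, into the shape \eqref{eq:ak-def}. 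Concretely: $f' = df(\tilde\eta)$ vanishes on $S_1$ to order one transversally, $f'' = df'(\tilde\eta)$ vanishes on $S_2$, and so on by \eqref{eq:lambda-eta}, so $\partial_t^{\,j} f$ vanishes on $S_j$; combined with the failure of $(k+1)$-singularity, $\partial_t^{\,k+1} f$ does \emph{not} vanish at $p$ in the relevant sense, pinning down the leading $t^{k+2}$ behaviour of one component. Then I would identify the image of $f$ near $f(p)$ with the discriminant set $\{F = F_t = 0\}$ of the unfolding \eqref{eq:versal}: the function $F$ is reconstructed from $f$ by using the null flow, and the $k$-nondegeneracy guarantees the unfolding parameters $u_0, \dots, u_k$ can be taken as part of the coordinate system, i.e.\ the unfolding one obtains is versal. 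A diffeomorphism of $\K^{n+1}$ straightening this discriminant set onto the model discriminant, together with the coordinate change on $U$, realizes the desired right-left equivalence.

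The main obstacle, and the place where the real work lies, is the construction of the KRSUY-coordinates and the verification that the unfolding extracted from $f$ is genuinely versal (equivalently, infinitesimally versal) rather than merely an unfolding whose discriminant happens to contain the singularity. The delicate point is that versality requires the parameter directions $\partial_{u_0}, \dots, \partial_{u_k}$ to span the relevant Jacobian-module quotient, and one must see that this spanning is \emph{exactly} the $k$-nondegeneracy hypothesis—no more, no less. I expect this to require a careful bookkeeping argument tracking how each successive derivative $\lambda^{(j)}$ controls one further Taylor coefficient, with the nonvanishing of $\lambda^{(k)}$ at $p$ supplying the last, transversality-type condition; this should be organized as a separate lemma (the construction of the coordinates) so that the proof of Theorem~\ref{thm:criteria} itself becomes a short deduction once the normal form is in hand. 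A secondary technical care point is that for $\K = \C$ one cannot use partitions of unity or bump functions, so the coordinate changes and the straightening diffeomorphism must be produced by the holomorphic (formal-then-convergent, or implicit-function) methods; but since every condition in sight is algebraic in finitely many jets, this causes no essential difficulty beyond being stated with care.
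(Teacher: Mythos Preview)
Your strategy---build the $k$-th KRSUY coordinates, exhibit $\Image(f)$ as the discriminant of a versal unfolding of an $A_{k+1}$ function germ, then conclude right-left equivalence---is exactly the paper's. Two steps in your sketch are underspecified, and in each case the paper's resolution uses an ingredient you have not named.

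First, the unfolding is not ``reconstructed from $f$ by using the null flow.'' The paper instead writes down an explicit generating family
\[
   \Phi(X,Y,z_n) := \inner{\nu(Y,z_n)}{f(Y,z_n)-(X,Y)}
\]
using the \emph{normal vector} $\nu$; this formula is what makes the discriminant equal $\Image(f)$ (Proposition~\ref{lem:discri}, using that $(\nu^1,\nu^2)$ and $((\nu^1)',(\nu^2)')$ are independent, Lemma~\ref{lem:lin-indep}) and what gives the clean identity $\Phi^{(m+2)}=a\lambda^{(m)}$ at the origin in the versality computation (Lemma~\ref{lem:phi2lam}). The Legendrian structure, not just the null direction, is doing the work here; your phrase about the null flow does not capture it. Second, your final sentence---``a diffeomorphism of $\K^{n+1}$ straightening this discriminant set onto the model discriminant, together with the coordinate change on $U$, realizes the desired right-left equivalence''---hides a genuine step: diffeomorphic images do not automatically give right-left equivalent parametrizations of those images. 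The paper closes this by citing Zakalyukin's theorem (two fronts with locally diffeomorphic images and dense regular sets are right-left equivalent). Without invoking that result or proving an equivalent statement, the argument is incomplete.
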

Though $k$-singular points are defined only for $k\le n$,
we define any points are not $(n+1)$-singular.
This theorem is proved in 
Sections~\ref{sec:adopted} and \ref{sec:versal}.

\begin{corollary}\label{cor:actual}
 Let $f:U\to\K^{n+1}$ be a front.
 Then $f$ at\/ $p\in U$ is $\K$-right-left equivalent to 
 the $A_{k+1}$-front singularity $(1\leq k\leq n)$  
 if and only if
 \begin{equation}\label{eq:actual-1}
     \lambda=\lambda'=\cdots=\lambda^{(k-1)}=0,
           \quad  \lambda^{(k)}\ne 0
 \end{equation}
 at $p$ and the Jacobian matrix of $\K$-differentiable map 
 $\Lambda:=(\lambda,\lambda',\ldots,\lambda^{(k-1)}):U\to \K^{k}$
 is of rank $k$ at $p$.
\end{corollary}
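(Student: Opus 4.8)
The plan is to show that the two conditions in the corollary, taken together, are equivalent to the hypothesis of Theorem~\ref{thm:criteria} --- namely that $p$ is $k$-nondegenerate but not $(k+1)$-singular --- after which the corollary is an immediate restatement of that theorem. The argument is pure linear algebra run along the filtration $S_0=U\supset S_1\supset\dots\supset S_k$, using only the descriptions of the $S_j$ recorded in \eqref{eq:lambda-eta}; no further input about fronts is needed. Throughout I would invoke the elementary fact that, for linearly independent functionals $\beta_0,\dots,\beta_{m-1}$ on a vector space, a functional $\beta_m$ restricts to a nonzero functional on $\bigcap_{l=0}^{m-1}\ker\beta_l$ if and only if $\beta_0,\dots,\beta_m$ are linearly independent.

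The core step is a claim proved by induction on $j$ for $1\le j\le k$: \emph{$p$ is $j$-nondegenerate if and only if $\lambda^{(0)}(p)=\dots=\lambda^{(j-1)}(p)=0$ and $(d\lambda^{(0)})_p,\dots,(d\lambda^{(j-1)})_p$ are linearly independent.} The base case $j=1$ is the definition of $1$-nondegeneracy. For the inductive step, I would assume the claim at level $j-1$; then $(j-1)$-nondegeneracy of $p$ is equivalent to $\lambda^{(0)}(p)=\dots=\lambda^{(j-2)}(p)=0$ together with the linear independence of $(d\lambda^{(0)})_p,\dots,(d\lambda^{(j-2)})_p$, and under this hypothesis \eqref{eq:lambda-eta} identifies $S_{j-1}$ near $p$ with the level set $\{\lambda=\dots=\lambda^{(j-2)}=0\}$, which is then a submanifold with $T_pS_{j-1}=\bigcap_{l=0}^{j-2}\ker(d\lambda^{(l)})_p$, and it identifies the condition $p\in S_j$ with $\lambda^{(j-1)}(p)=0$. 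The definition of $j$-nondegeneracy, that $(d\lambda^{(j-1)})_p$ is nonzero on $T_pS_{j-1}$, then translates, via the elementary fact above, exactly into the linear independence of $(d\lambda^{(0)})_p,\dots,(d\lambda^{(j-1)})_p$. This one computation yields both implications, so the induction closes; at $j=k$ it says that $p$ is $k$-nondegenerate precisely when $\lambda^{(0)}(p)=\dots=\lambda^{(k-1)}(p)=0$ and the Jacobian matrix of $\Lambda$ has rank $k$ at $p$.

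It then remains to reconcile ``$p$ is not $(k+1)$-singular'' with ``$\lambda^{(k)}(p)\ne 0$'' for a $k$-nondegenerate point $p$. When $k<n$, \eqref{eq:lambda-eta} applied with $j=k+1$ shows that such a $p$ lies in $S_{k+1}$ if and only if $\lambda^{(k)}(p)=0$, which is exactly the equivalence needed. When $k=n$, a point is never $(n+1)$-singular by convention, so here I would instead check that $n$-nondegeneracy already forces $\lambda^{(n)}(p)\ne 0$: since $S_{n-1}$ is one-dimensional near $p$, while $\eta_p\in T_pS_{n-1}$ (because $p\in S_n$, by \eqref{eq:lambda-eta}) and $\eta$ is nowhere zero, one has $T_pS_{n-1}=\K\eta_p$, so that $\lambda^{(n)}(p)=(d\lambda^{(n-1)})_p(\tilde\eta_p)=(d\lambda^{(n-1)})_p(\eta_p)\ne 0$ by $n$-nondegeneracy. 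Combining the three paragraphs with Theorem~\ref{thm:criteria} completes the proof.

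The step I expect to require the most care is the bootstrapping inside the inductive claim: $j$-nondegeneracy is defined in terms of the restriction of $d\lambda^{(j-1)}$ to the tangent space of the manifold $S_{j-1}$, and one may replace $T_pS_{j-1}$ by $\bigcap_{l\le j-2}\ker(d\lambda^{(l)})_p$ only after knowing that $S_{j-1}$ is cut out transversally by $\lambda,\dots,\lambda^{(j-2)}$ near $p$, which is precisely the inductive hypothesis at the preceding level. Once the induction is organized so that this transversality is in hand at each step, the remaining verifications are routine.
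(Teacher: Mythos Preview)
Your proof is correct and takes a genuinely different route from the paper's. The paper proves the corollary by repeatedly invoking Lemma~\ref{lem:adopted-kie}: for the necessary direction it appeals to that lemma directly, and for the sufficient direction it passes, at each stage $j$, to the $j$-th adopted coordinate system of Lemma~\ref{lem:adopted-coord} so that Lemma~\ref{lem:adopted-kie} translates the rank hypothesis on $(\lambda,\dots,\lambda^{(j)})$ into $(j{+}1)$-nondegeneracy. Your argument bypasses the KRSUY coordinates entirely: using only the description \eqref{eq:lambda-eta} of $S_{j-1}$ as a level set and the elementary fact about restriction of linear functionals to common kernels, you identify $T_pS_{j-1}$ with $\bigcap_{l\le j-2}\ker(d\lambda^{(l)})_p$ and read off $j$-nondegeneracy directly as linear independence of $(d\lambda^{(0)})_p,\dots,(d\lambda^{(j-1)})_p$. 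This is cleaner and more self-contained for the corollary itself; the paper's coordinate approach, on the other hand, is the same machinery that drives the proof of Theorem~\ref{thm:criteria} and Lemma~\ref{lem:phi2lam}, so for the authors it is natural to reuse it here. Your separate treatment of the boundary case $k=n$, where ``not $(n{+}1)$-singular'' is vacuous and one must instead derive $\lambda^{(n)}(p)\ne 0$ from $n$-nondegeneracy via $T_pS_{n-1}=\K\eta_p$, is a point the paper's proof glosses over.
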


\begin{corollary}\label{cor:submanifold1}
 Let $U$ be a domain in $\K^n$, 
 $f\colon{}U\to\K^{n+1}$ a front,
 and $p\in U$ a nondegenerate singular point.
 Take a local tangent frame field $\{v_1,\dots,v_{n-1}\}$
 of the singular set $S(f)$ {\rm(}a smooth hypersurface in $U${\rm)} 
 around $p$ and set
\begin{equation}\label{eq:mu}
     \mu:= \det(v_1,\dots,v_{n-1},\eta),
\end{equation}
 as the determinant function on $\K^n$
 where $\eta$ is a null vector field.
 Then $p$ is $2$-singular if and only if $\mu(p)=0$.
 In particular,
 $p$ is $\K$-right-left equivalent to
 the $A_{2}$-singularity if and only if $\mu(p)\ne 0$.
 Moreover, a $2$-singular point
 $p$ is $2$-nondegenerate if and only if $d\mu(T_p S(f))\ne \{0\}$.
 Furthermore, $p$
 is $\K$-right-left equivalent to
 the $A_{3}$-singularity
 if and only if
 $\mu(p)=0$ and $d\mu(\eta_p)\ne 0$
 hold.
 {\rm (}If $\mu(p)=0$, then the null vector $\eta_p$ at $p$
 is tangent to $S(f)$ and $d\mu(\eta_p)$ is well-defined.{\rm)}
\end{corollary}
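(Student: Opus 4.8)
The plan is to reduce every assertion to the criterion already established in Theorem~\ref{thm:criteria} and Corollary~\ref{cor:actual}, by identifying the function $\mu$ (up to a nowhere-vanishing factor) with $\lambda'=\lambda^{(1)}=d\lambda(\tilde\eta)$ along $S_1=S(f)$, and $d\mu$ with $d\lambda^{(1)}$ on $T_pS_1$ when $\mu(p)=0$. First I would fix a local coordinate system $(z_1,\dots,z_n)$ on $U$ adapted to the singular set, so that $S_1=\{z_n=0\}$ near $p$ and $\{v_1,\dots,v_{n-1}\}=\{\partial_{z_1},\dots,\partial_{z_{n-1}}\}$ restricted to $S_1$; since $1$-nondegeneracy gives $d\lambda_p\neq 0$, after a further coordinate change we may take $\lambda=z_n$. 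Writing the extended null vector field as $\tilde\eta=\sum_i a_i\,\partial_{z_i}$ with $\K$-differentiable coefficients $a_i$, the defining property $\eta_q\in\ker df_q$ on $S_1$ together with $\dimK\ker df_q=1$ (Lemma~\ref{lem:non-deg-indep}) shows that $\tilde\eta$ and the $v_i$ span an $n$-dimensional space exactly when $a_n\neq 0$; concretely $\mu=\det(v_1,\dots,v_{n-1},\tilde\eta)=c\cdot a_n$ for a nowhere-vanishing function $c$ (the Jacobian relating the two frames). On the other hand $\lambda'=d\lambda(\tilde\eta)=\tilde\eta(z_n)=a_n$ in these coordinates. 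Hence $\mu$ and $\lambda'$ differ by a nonvanishing factor on a neighborhood of $p$ in $S_1$ — this is the core computation, and it is where one must be careful that $\tilde\eta$ restricted to $S_1$ genuinely is the null vector field $\eta$ and that the frame $\{v_i\}$ is a frame of $TS_1$, not merely $n-1$ independent vectors in $U$.

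Granting $\mu=c\,\lambda'$ with $c(p)\neq 0$, the first three assertions are immediate. By \eqref{eq:lambda-eta} with $j=2$, a nondegenerate singular point $p$ is $2$-singular if and only if $\lambda'(p)=0$, i.e. $\mu(p)=0$. By Theorem~\ref{thm:criteria} (equivalently Corollary~\ref{cor:actual} with $k=1$), $p$ is right-left equivalent to the $A_2$-singularity precisely when it is $1$-nondegenerate but not $2$-singular, i.e. $\mu(p)\neq 0$. For the $2$-nondegeneracy statement: when $\mu(p)=0$ we have $d\mu_p=c(p)\,d\lambda'_p+\lambda'(p)\,dc_p=c(p)\,d\lambda'_p$ on $T_pU$, so $d\mu(T_pS_1)\neq\{0\}$ if and only if $d\lambda'(T_pS_1)=d\lambda^{(1)}(T_pS_1)\neq\{0\}$, which by \eqref{eq:not} with $j=2$ is exactly $2$-nondegeneracy.

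Finally, for the $A_3$-characterization I would invoke Corollary~\ref{cor:actual} with $k=2$: $p$ is right-left equivalent to the $A_3$-singularity if and only if $\lambda(p)=\lambda'(p)=0$, $\lambda''(p)\neq 0$, and the Jacobian of $(\lambda,\lambda')$ has rank $2$ at $p$. The condition $\lambda'(p)=0$ is $\mu(p)=0$; since $p$ is already assumed $1$-nondegenerate, $d\lambda_p\neq 0$, so the rank-$2$ condition on $(\lambda,\lambda')$ is equivalent to $d\lambda'_p\notin\spann{d\lambda_p}$, i.e. to $d\lambda'$ being nonzero on $\ker d\lambda_p=T_pS_1$ — which is the $2$-nondegeneracy just treated; and one checks that under $\lambda'(p)=0$ together with $2$-nondegeneracy the remaining condition $\lambda''(p)\neq 0$ is equivalent to $d\lambda'(\eta_p)\neq 0$, because on $S_2$ (where $\eta$ is tangent to $S_1$) one has $\lambda''=d\lambda'(\tilde\eta)$, while off $S_2$ the value $\lambda''(p)$ and $d\lambda'(\eta_p)$ agree up to the same nonvanishing factor; translating back via $\mu=c\lambda'$ gives $d\mu(\eta_p)\neq 0$. (The parenthetical remark that $\mu(p)=0$ forces $\eta_p\in T_pS(f)$, making $d\mu(\eta_p)$ well-defined, is exactly the first equality in \eqref{eq:lambda-eta}.) The main obstacle is the bookkeeping in the identification $\mu=c\,\lambda'$: one must verify it holds as functions on a full neighborhood of $p$ (so that its differential may be taken freely), not merely pointwise on $S_1$, and track how the extended objects $\tilde\eta$, $\lambda^{(1)}$, $\lambda^{(2)}$ restrict to the nested singular sets.
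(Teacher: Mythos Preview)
Your approach is essentially the same as the paper's: both choose coordinates with $\lambda=z_n$ (so $S_1=\{z_n=0\}$ and $v_j=\partial_{z_j}$), observe that in these coordinates $\mu$ equals the $z_n$-component of $\eta$, which is exactly $\lambda'=d\lambda(\tilde\eta)$, and then translate the conditions on $\lambda,\lambda',\lambda''$ from Theorem~\ref{thm:criteria}/Corollary~\ref{cor:actual} into conditions on $\mu$. The only cosmetic difference is that the paper takes $v_j=\partial_{x_j}$ from the start so the factor $c$ is identically $1$, and it phrases the $A_3$ step via ``not $3$-singular'' (i.e.\ $d\mu(\eta_p)\neq 0$ on $S_2$) rather than via the rank condition in Corollary~\ref{cor:actual}; your observation that $d\mu(\eta_p)\neq 0$ already forces $2$-nondegeneracy (since $\eta_p\in T_pS_1$ when $\mu(p)=0$) is exactly what makes the two formulations match.
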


\begin{remark}
 These assertions for $n=2$ 
 have been already proved in \cite{krsuy}.
 The criterion for
 $A_2$-singularities for general $n$ has been shown in \cite{suy}.
\end{remark}

Let $f\colon{}U\to\K^{n+1}$ a front, 
and $p\in U$ a $2$-nondegenerate singular point.
Then $S_2=S(f_1)$ is a submanifold of codimension $2$ in $U$ and 
Lemma~\ref{lem:zero} yields that the null vector field $\eta$
is a tangent vector field of $S_2$, and
we can set
\[
  \mu'=d\mu(\eta),\quad
  \mu''=d\mu'(\eta),\quad\cdots,\quad
  \mu^{(k-1)}=d\mu^{(k-2)}(\eta).
\]
Then these functions $\mu',\mu''\cdots,\mu^{(k-1)}$
are $C^\infty$-functions on $S_2$.

\begin{corollary}\label{cor:submanifold2}
 Let $U$ be a domain in $\K^n$, 
 $f\colon{}U\to\K^{n+1}$ a front, 
 and $p\in U$ a $2$-nondegenerate singular point.
 Then $p$ is $\K$-right-left equivalent to
 the $A_{k+1}$-singularity $(3\le k\le n)$
 if and only if
 \begin{equation}\label{eq:mu-cond}
   \mu=\mu'=\dots = \mu^{(k-2)}=0,\qquad
   \mu^{(k-1)}\neq 0\qquad \text{at $p$}
 \end{equation}
 hold, and the Jacobian matrix of the map 
 $\Phi_\mu:=(\mu',\dots,\mu^{(k-2)})\colon{}S_2\to\K^{k-2}$ is of rank
 $k-2$ at $p$.
\end{corollary}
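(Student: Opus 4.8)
The plan is to reduce everything to Corollary~\ref{cor:actual} and then to convert the conditions there on $\lambda,\lambda',\dots,\lambda^{(k)}$ into the stated conditions on $\mu,\mu',\dots,\mu^{(k-1)}$, exploiting a precise relation between the two towers of functions. Since $p$ is $2$-nondegenerate we already have $\lambda(p)=\lambda'(p)=0$ with $d\lambda_p,d\lambda'_p$ linearly independent, so near $p$ the set $S_2=S(f_1)=\{\lambda=\lambda'=0\}$ (Lemma~\ref{lem:zero}) is a smooth codimension-two submanifold with $T_pS_2=\ker d\lambda_p\cap\ker d\lambda'_p$; also, by Corollary~\ref{cor:submanifold1}, $\mu(p)=0$, $d(\mu|_{S_1})_p\ne0$, and $S_2$ is the zero set of $\mu|_{S_1}$ near $p$. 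Throughout I use the derivation $D:=d(\,\cdot\,)(\tilde\eta)$, so that $\lambda^{(j)}=D^j\lambda$ and, on the singular strata, $\mu^{(j)}$ is computed by $D^j\mu$ (this agrees with the definitions in the statement because $\tilde\eta|_{S_1}=\eta$, hence $(Dg)(q)=dg_q(\eta_q)$ for every $q\in S_1$; one checks that the vanishing and rank conditions at $p$ are unaffected by the choice of extension $\tilde\eta$ once the lower-order quantities vanish there).

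The heart of the argument is the comparison
\begin{equation*}
  \lambda^{(j)}\equiv a\,\mu^{(j-1)}+\sum_{i=0}^{j-2}c^{(j)}_i\,\mu^{(i)}\pmod{(\lambda)}\qquad(j\ge1),
\end{equation*}
valid in the local ring $\mathcal{O}_{U,p}$, where $\mu^{(0)}:=\mu$, the sum is empty for $j=1$, the $c^{(j)}_i$ lie in $\mathcal{O}_{U,p}$, and $a\in\mathcal{O}_{U,p}$ is a \emph{fixed} unit independent of $j$. The base case $j=1$, i.e.\ $\lambda'\equiv a\,\mu\pmod{(\lambda)}$, amounts to saying that $\lambda'|_{S_1}$ and $\mu|_{S_1}$ are associates in $\mathcal{O}_{S_1,p}$; this holds because, by Lemma~\ref{lem:zero} and Corollary~\ref{cor:submanifold1}, both vanish exactly on $S_2$ (a smooth hypersurface of $S_1$ by $2$-nondegeneracy), and each has non-vanishing differential at $p$ along $S_1$ (this last for $\lambda'$ is the definition of $2$-nondegeneracy, and for $\mu$ is the criterion in Corollary~\ref{cor:submanifold1}). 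For the inductive step I would lift the order-$j$ congruence to an honest equality $\lambda^{(j)}=a\,\mu^{(j-1)}+\sum c^{(j)}_i\mu^{(i)}+\lambda e_j$ in $\mathcal{O}_{U,p}$, apply $D$, expand by the Leibniz rule, and substitute the base case $\lambda'=a\mu+\lambda e_1$ to reabsorb the term coming from $D\lambda=\lambda'$; a short check shows that the coefficient of $\mu^{(j)}$ remains exactly $a$, while the lower $\mu^{(i)}$ pick up new coefficients in $\mathcal{O}_{U,p}$.

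Granting the comparison, restrict it to $S_2$: there $\lambda=0$ and $\mu=\mu^{(0)}=0$, so for $j\ge1$ one gets $\lambda^{(j+1)}|_{S_2}=a|_{S_2}\,\mu^{(j)}|_{S_2}+\sum_{i=1}^{j-1}(\text{element of }\mathcal{O}_{S_2,p})\,\mu^{(i)}|_{S_2}$. Evaluating at $p$ and inducting on $j$, the list $\lambda''(p)=\dots=\lambda^{(k-1)}(p)=0$, $\lambda^{(k)}(p)\ne0$ becomes exactly the list $\mu'(p)=\dots=\mu^{(k-2)}(p)=0$, $\mu^{(k-1)}(p)\ne0$ (recall $\lambda(p)=\lambda'(p)=\mu(p)=0$ is already known). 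For the rank statements, differentiate the restricted identities at $p$ and evaluate on $T_pS_2$: since $\mu^{(i)}(p)=0$ for $1\le i\le k-2$, the Leibniz terms in which a derivative falls on the $\mathcal{O}_{S_2,p}$-coefficient drop out, so $d(\lambda^{(j)}|_{S_2})_p$ equals $a(p)\,d(\mu^{(j-1)}|_{S_2})_p$ modulo the span of $d(\mu'|_{S_2})_p,\dots,d(\mu^{(j-2)}|_{S_2})_p$ for $2\le j\le k-1$. This triangular relation with nonzero diagonal $a(p)$ shows that $\{d\lambda''_p,\dots,d\lambda^{(k-1)}_p\}$ is linearly independent on $T_pS_2$ if and only if $\{d(\mu'|_{S_2})_p,\dots,d(\mu^{(k-2)}|_{S_2})_p\}$ is; since $d\lambda_p,d\lambda'_p$ are independent, the former is equivalent to $\operatorname{rank}(d\Lambda)_p=k$ and the latter to $\operatorname{rank}(d\Phi_\mu)_p=k-2$. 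Combining the two equivalences with Corollary~\ref{cor:actual} finishes the proof.

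The step I expect to be the main obstacle is the inductive construction of the comparison congruence. The difficulty is intrinsic: $\tilde\eta$ is tangent to $S_1$ only along $S_2$, and to $S_2$ only along $S_3$, so one cannot simply iterate a derivative intrinsically on $S_2$; one must keep all the correction terms in the ambient ring $\mathcal{O}_{U,p}$, track the ideal $(\lambda)$ carefully through each application of $D$, and verify that the distinguished unit $a$ is never perturbed. This bookkeeping, together with the routine but necessary verification that $D^j\mu$ restricted to $S_2$ really represents the function $\mu^{(j)}$ of the statement, is where the work lies; everything else is a formal consequence of Corollary~\ref{cor:actual} and the linear-algebra translation above.
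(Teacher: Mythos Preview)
Your approach is correct and follows the same overall strategy as the paper's: reduce to Corollary~\ref{cor:actual} by showing that, on the appropriate singular strata, the tower $\lambda,\lambda',\dots$ matches the tower $\mu,\mu',\dots$ shifted by one, up to a unit and lower-order corrections. The execution differs. The paper does not carry a general unit $a$ through an induction in $\mathcal{O}_{U,p}$; instead it extends the coordinate choice from the proof of Corollary~\ref{cor:submanifold1} (where $\lambda=x_n$, $v_j=\partial_{x_j}$, hence $\lambda'|_{S_1}=\eta_n=\mu$) by using $d\mu|_{T_pS_1}\ne 0$ to arrange in addition that $\mu=x_{n-1}$ on $S_1$, so that $S_2=\{x_{n-1}=x_n=0\}$. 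In these coordinates your unit is $a\equiv 1$, the base relation is the literal equality $\lambda'=\mu$ on $S_1$, and the Jacobian of $\Lambda$ at $p$ acquires a block form with the $(x_{n-1},x_n)$-columns producing the $2\times 2$ triangular block and the remaining $(x_1,\dots,x_{n-2})$-columns carrying (a row-equivalent copy of) the Jacobian of $\Phi_\mu$. Your congruence argument buys coordinate-independence and makes explicit why only $a(p)\neq 0$ and the triangular structure matter; the paper's coordinate normalization buys exactly the simplification of the inductive bookkeeping you identified as the main obstacle. In either route the lower-order $\mu^{(i)}$ corrections are genuinely present for $j\ge 3$ and must be handled by the triangular argument you give, so your caution there is warranted rather than excessive.
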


\begin{remark}
 In Corollary \ref{cor:submanifold2},
 the criterion for $A_4$-singularities reduces to
 $2$-nondegeneracy and
 \[
   \mu(p)=\mu'(p)=0,\qquad
   \mu''(p)\neq 0,
 \]
 since the condition $\mu''(p)\neq 0$
 implies that 
 the map $\Phi_\mu=\mu'$ is of rank $1$ at $p$.
\end{remark}
Proofs of these assertions are given in Section~\ref{sec:versal}.

Let $f:U\to\K^{n+1}$ be a front and
$p$ a $1$-nondegenerate singular point.
Then we can consider the restriction 
$f_1:S(f)\to\K^{n+1}$ of $f$ into $S_1$.
Let us denote the limiting tangent hyperplane by 
\[
   \nu(p)^\perp:=\{\vect v\in T_{f(p)}\K^{n+1}\,;\, 
                   \inner{\vect v}{\nu(p)}=0\}.
\]
The following assertion can be proved straightforwardly.
\begin{corollary}\label{thm:shadow}
 Let $f:U\to \K^{n+1}$  be a front and $p\in U$ 
 a $1$-nondegenerate singular point.
 For any vector $\vect n \not\in \nu(p)^\perp$,
 the following are equivalent{\rm:}
 \begin{enumerate}
  \item  $f$ at $p$ is an $A_{k+1}$-front singularity $(k\leq n)$.
  \item  The projection $\pi_{\vect n}\circ f|_{S(f)}$ at $p$ is an
	$A_{k}$-front singularity $(k\leq n)$.
 \end{enumerate}
 Here, 
 $\pi_{\vect n}$ is the normal projection with respect to $\vect n$
 to the hyperplane 
 \[
    \vect n^\perp:=\{\vect v\in T_{f(p)}\K^{n+1}\,;\, 
        \inner{\vect v}{\vect n}=0\}
 \]
 and an $A_1$-front singularity means a regular point.
\end{corollary}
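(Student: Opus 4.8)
The plan is to realize $g:=\pi_{\vect n}\circ f|_{S(f)}$ as the restriction, to its critical set, of an equidimensional $\K$-differentiable map $\hat f$, to prove that this restriction is again a front near $p$ whose $\lambda$-function equals, up to a factor nonvanishing at $p$, the computable function $\mu$ of Corollary~\ref{cor:submanifold1}, and then to extract the equivalence from Corollaries~\ref{cor:actual}, \ref{cor:submanifold1} and~\ref{cor:submanifold2}. For the setup, apply the linear isomorphism of $\K^{n+1}$ carrying $\vect n$ to $e_{n+1}=(0,\dots,0,1)$; this changes $f$ by a left-equivalence and $g$ by a target isomorphism, so we may assume $\vect n=e_{n+1}$, and since $\inner{\vect n}{\nu(p)}\ne 0$ we may rescale $\nu$ near $p$ so that $\nu=(\hat\nu,1)$. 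Writing $f=(\hat f,f^{n+1})$ with $\hat f:=\pi_{\vect n}\circ f=(f^1,\dots,f^n)$, the Legendrian condition reads $df^{n+1}=-\inner{d\hat f}{\hat\nu}$; expanding $\lambda=\det(f_{x_1},\dots,f_{x_n},\nu)$ along its last row and applying Cramer's rule to $\hat\nu$ as a combination of $\hat f_{x_1},\dots,\hat f_{x_n}$ yields the identity
\[
  \lambda=\inner{\nu}{\nu}\,J,\qquad J:=\det(\hat f_{x_1},\dots,\hat f_{x_n}).
\]
In particular $\sum_j c_j f_{x_j}=0\Leftrightarrow\sum_j c_j\hat f_{x_j}=0$, so $S(\hat f)=\{J=0\}=\{\lambda=0\}=S(f)=S_1$ and $\ker d\hat f_q=\ker df_q$ near $p$; thus $f$ and $\hat f$ share an extended null vector field $\tilde\eta$, and $g=\hat f|_{S_1}$ is the restriction of the equidimensional map $\hat f\colon U^n\to\K^n$ to its critical set. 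Also, since $p\in S_1$, the identity gives $d\lambda_p=\inner{\nu}{\nu}(p)\,dJ_p$, so $1$-nondegeneracy of $p$ forces $\inner{\nu}{\nu}(p)\ne 0$, whence $\lambda$ and $J$ differ by a function nonvanishing at $p$.

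Next I would check that $g$ is a front near $p$ with $\lambda_g$ a unit times $\mu$. Since $\ker d\hat f_p$ is one-dimensional (Lemma~\ref{lem:non-deg-indep}), after coordinate changes (which change $\mu$ only by a unit) we may put $\hat f$ in the corank-one form $\hat f(y_1,\dots,y_n)=(y_1,\dots,y_{n-1},h(y))$ with $\tilde\eta=\partial/\partial y_n$ and, after a harmless linear change of the last target coordinate, $h_{y_j}(p)=0$ for $j<n$. Then $J=h_{y_n}$, and $S_1=\{h_{y_n}=0\}$ is a smooth hypersurface (by $1$-nondegeneracy); one checks directly that $\tilde\nu_g:=(-h_{y_1},\dots,-h_{y_{n-1}},1)|_{S_1}$ is a nowhere-vanishing $\K$-differentiable field along $g$ with $\inner{dg}{\tilde\nu_g}=0$, that $\ker dg_q=\langle\eta_q\rangle$ for $q\in S(g)=S_2$ (using that $\vect n\notin\nu(q)^\perp$ near $p$, so $\pi_{\vect n}$ creates no new singularities, together with Lemma~\ref{lem:zero}), and that $(g,[\tilde\nu_g])$ is an immersion — the only nontrivial point being immersivity along $\langle\eta\rangle$ over $S_2$, which holds because $0\ne d(h_{y_n})|_{S_2}=(h_{y_1y_n},\dots,h_{y_{n-1}y_n},0)$ forces $\partial_{y_n}\tilde\nu_g\ne 0$ there. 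Hence $g$ is a front near $p$; computing its $\lambda$-function in an adapted chart of $S_1$ gives $\lambda_g=\bigl(1+\sum_{j<n}h_{y_j}^2\bigr)\mu=\inner{\tilde\nu_g}{\tilde\nu_g}\,\mu$, with $\mu$ as in Corollary~\ref{cor:submanifold1} and the coefficient equal to $1$ at $p$, hence a unit near $p$. Since $\eta$ is a null vector field for $g$ extended by $\tilde\eta$, the $\tilde\eta$-derivatives $\lambda_g',\lambda_g'',\dots$ coincide with $\mu',\mu'',\dots$ up to units and already lower-order terms.

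It then remains to assemble the equivalence. Corollary~\ref{cor:actual} applied to the front $g\colon S_1\cong\K^{n-1}\to\K^n$ (with $k$ there replaced by $k-1$) says that $g$ at $p$ is $\K$-right-left equivalent to the $A_k$-front singularity $(1\le k\le n)$ if and only if $\lambda_g=\dots=\lambda_g^{(k-2)}=0$, $\lambda_g^{(k-1)}\ne 0$ at $p$ and the Jacobian matrix of $(\lambda_g,\dots,\lambda_g^{(k-2)})$ has rank $k-1$ at $p$ (for $k=1$ this reads $\lambda_g(p)\ne0$, i.e.\ $g$ is regular at $p$). Translating through $\lambda_g=\inner{\tilde\nu_g}{\tilde\nu_g}\mu$, this becomes: $\mu(p)\ne0$ for $k=1$; $\mu(p)=0$, $d\mu(\eta_p)\ne0$ for $k=2$; and $\mu=\dots=\mu^{(k-2)}=0$, $\mu^{(k-1)}\ne0$ at $p$ with the Jacobian matrix of $(\mu',\dots,\mu^{(k-2)})$ on $S_2$ of rank $k-2$ for $3\le k\le n$. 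By Corollaries~\ref{cor:submanifold1} and~\ref{cor:submanifold2} this is exactly the condition that $f$ at $p$ be $\K$-right-left equivalent to the $A_{k+1}$-front singularity; the conventions at the ends ($A_1$ = regular point, and no point is $(n+1)$-singular) are absorbed by the same dictionary, so (1) and (2) are equivalent.

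The place where the real work lies is the second step — verifying that $g$ is a genuine wave front near $p$ (not merely a frontal) and that its $\lambda$-function agrees, up to a unit, with the explicit function $\mu$. Granted the algebraic identity $\lambda=\inner{\nu}{\nu}J$, everything else is bookkeeping of the definitions of $j$-(non)degeneracy against Corollaries~\ref{cor:actual}, \ref{cor:submanifold1} and~\ref{cor:submanifold2}.
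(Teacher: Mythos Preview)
The paper gives no proof of this corollary beyond the sentence ``The following assertion can be proved straightforwardly.'' Your argument is correct and supplies genuine content, so there is no paper proof to compare against in detail.

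That said, your route through the explicit identities $\lambda=\inner{\nu}{\nu}J$ and $\lambda_g=\inner{\tilde\nu_g}{\tilde\nu_g}\mu$ and then through Corollaries~\ref{cor:submanifold1} and~\ref{cor:submanifold2} is more elaborate than necessary. A shorter path---presumably what the authors intend---is this: once you have shown that $g=\pi_{\vect n}\circ f|_{S_1}$ is a front with null direction $\eta$ (your second step, which is indeed where the work lies), observe that $\pi_{\vect n}$ is injective on $\nu(q)^\perp$ for $q$ near $p$, so for every $j$ the maps $f_j$ and $\pi_{\vect n}\circ f_j$ have the same singular set and the same null direction. This gives $S_j(g)=S_{j+1}(f)$ as germs at $p$, and hence ``$j$-nondegenerate and not $(j{+}1)$-singular for $g$'' coincides with ``$(j{+}1)$-nondegenerate and not $(j{+}2)$-singular for $f$''. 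Applying Theorem~\ref{thm:criteria} to $f$ and to $g$ separately finishes the proof without invoking the $\mu$-criteria at all. Your approach has the virtue of making the link to the computable invariant $\mu$ explicit; the direct approach is shorter.

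One small wording issue: your phrase ``$\eta$ is a null vector field for $g$ extended by $\tilde\eta$'' is not quite right, since the extended null field $\tilde\eta$ for $f$ lives on $U$ and is generally not tangent to $S_1$ away from $S_2$. You need a separate extended null field $\tilde\eta_g$ tangent to $S_1$ and agreeing with $\eta$ on $S_2$. This does not affect the validity of the argument (the conditions in Corollary~\ref{cor:actual} are independent of that choice), but the sentence should be adjusted.
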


The following assertion follows immediately from 
Theorem~\ref{thm:morin-cri} in the appendix.
\begin{corollary}\label{cor:rest-front}
 Let $\Omega$ be a domain in $\K^{n+1}$ and $f:\Omega\to \K^{n+1}$
 a $\K$-differentiable map.
 Suppose that $p\in \Omega$ is a $1$-nondegenerate singular point,
 namely, the exterior derivative of the Jacobian
 of $f$ does not vanish at $p$.
 Then the following are equivalent{\rm:}
 \begin{enumerate}
  \item $p$ is an $A_k$-Morin singular point of $f$.
  \item $f|_{S(f)}$ is a front, and 
    $p$ is an $A_k$-front singularity of $f|_{S(f)}$.
 \end{enumerate}
\end{corollary}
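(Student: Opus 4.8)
The plan is to obtain the corollary by matching two criteria of the same shape: the $A_k$-Morin criterion supplied by Theorem~\ref{thm:morin-cri} in the appendix, and the $A_{k+1}$-front criterion supplied by Theorem~\ref{thm:criteria} (equivalently Corollary~\ref{cor:actual}). Both are expressed through a flag of nondegenerate singular sets cut out by iterated directional derivatives of a Jacobian-type determinant along a null direction, so the real work is to set up a dictionary between the ``Morin flag'' of $f$ and the ``front flag'' of $f|_{S(f)}$ and to check that the conditions correspond term by term, with the index shifted by one. If Theorem~\ref{thm:morin-cri} already packages its conclusion in terms of $f|_{S(f)}$, the corollary is immediate; otherwise the matching below supplies it.

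First I would verify that the hypotheses force $f|_{S(f)}$ to be a front. Put $\lambda_f:=\det(df)$, so $S(f)=\{\lambda_f=0\}$; since $(d\lambda_f)_p\neq 0$, $S(f)$ is an embedded $\K$-differentiable hypersurface of $\Omega$ (of dimension $n$) near $p$, and, exactly as in the last assertion of Lemma~\ref{lem:non-deg-indep}, $1$-nondegeneracy makes $df_q$ of corank exactly one for every $q\in S(f)$ near $p$. Hence $\bigl(\Image df_q\bigr)^{\perp}$ is a line depending $\K$-differentiably on $q$; a nonvanishing section $\nu$ of it is a $\K$-normal vector field along $f|_{S(f)}$, and $(f|_{S(f)},[\nu])$ is Legendrian. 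That its Legendrian lift is an immersion --- that is, that $f|_{S(f)}$ is a front --- is (part of) the content of Theorem~\ref{thm:morin-cri}; alternatively it follows from the explicit form of $\nu$ together with $(d\lambda_f)_p\neq 0$. Along the way I would record the identifications underlying the dictionary: a null vector field $\eta$ of $f$ restricts, on the second singular set, to a null vector field of the front $f|_{S(f)}$, and the determinant function $\lambda$ of the front $f|_{S(f)}$ agrees, up to a nowhere-zero $\K$-differentiable factor, with $d\lambda_f(\tilde\eta)$ for an extended null vector field $\tilde\eta$. The proof of this last point is the computation already used in the proof of Lemma~\ref{lem:zero}: in a coordinate system adapted to $\tilde\eta$ one has $\lambda_f=\phi\,\det(f_{z_1},\dots,f_{z_n},f')$ with $f'=df(\tilde\eta)$ vanishing on $S(f)$, and differentiating along $\tilde\eta$ kills every term except the one in which $f'$ is differentiated.

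Granting the dictionary, the Morin flag $S_1(f)\supset S_2(f)\supset\cdots$ of $f$ and the front flag of $f|_{S(f)}$ are related by $S_{j}(f|_{S(f)})=S_{j+1}(f)$ for all $j\geq 0$, with the paper's convention that the $0$-th singular set of a front is its domain; and the statement ``$p$ is $j$-nondegenerate but not $(j+1)$-singular for $f$'' translates into ``$p$ is $(j-1)$-nondegenerate but not $j$-singular for $f|_{S(f)}$''. By Theorem~\ref{thm:morin-cri} the former with $j=k$ characterizes $A_k$-Morin singular points of $f$; by Theorem~\ref{thm:criteria} applied to the front $f|_{S(f)}\colon S(f)\to\K^{n+1}$ (its integer parameter being $k-1$) the latter characterizes $A_k$-front singularities of $f|_{S(f)}$. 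Putting the two together yields the claimed equivalence. The step I expect to be the main obstacle is exactly the bookkeeping of the previous paragraph: showing that the coordinate-dependent extended null vector fields and Jacobian factors for $f$ and for $f|_{S(f)}$ are mutually compatible, so that the chains of vanishing and rank conditions line up with precisely the index shift claimed; once this is pinned down, the corollary is the formal consequence of Theorems~\ref{thm:morin-cri} and~\ref{thm:criteria} that the text advertises.
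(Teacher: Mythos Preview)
Your proposal is correct and follows the paper's intended route: the paper's entire proof is the one-line remark that the corollary ``follows immediately from Theorem~\ref{thm:morin-cri} in the appendix,'' which works precisely because the $\lambda$-conditions in Theorem~\ref{thm:morin-cri} and in Corollary~\ref{cor:actual} are formally identical. Your sketch supplies the dictionary (that $f|_{S(f)}$ is a front under $1$-nondegeneracy, that the null directions and determinant functions match up with the expected index shift) that the paper leaves entirely to the reader.
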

Here, the $A_k$-Morin singularities are defined in the appendix, and 
the $A_0$-Morin singularity means a regular point.
%%%%%%%%%%%%%%%%%%%%%%%%%%%%%%%%%%%%%%%%%%%%%%%%%%%%%%%%%%%%%%%%%%%%
\section{Adopted coordinates and $k$-nondegeneracy}\label{sec:adopted}

In \cite{krsuy}, a certain kind of special coordinate system was
introduced to treat the recognition problem of 
cuspidal edges and swallowtails in $\R^3$.
In this section, we give a generalization of such a coordinate system
around a singular point, which will play a crucial role.
\begin{lemma}[Existence of the $k$-th KRSUY-coordinate system]%
 \label{lem:adopted-coord} 
 Let $f:U\to\K^{n+1}$ be a front and
 $p\in U$  a $k$-nondegenerate singular point $(1\le k \le n)$.
 Then there exists a $\K$-differentiable coordinate system
 $(z_1,\dots,z_n)$ around $p$
 and a $($non-degenerate$)$ 
 $\K$-affine transformation $\Theta:\K^{n+1}\to\K^{n+1}$
 such that
 \[
   \hat{f}(z_1,\ldots,z_n)
        =\left(\hat{f}^{1},\ldots,\hat{f}^{n+1}\right)
        =\Theta\circ f(z_1,\ldots,z_n)
 \]
 satisfies the following properties{\rm :}
 \begingroup
 \renewcommand{\theenumi}{{\rm($\mathrm{P\!}_{\arabic{enumi}}$)}}
 \renewcommand{\labelenumi}{{\theenumi\ }}
 \begin{enumerate}
  \setcounter{enumi}{-1}
  \item\label{p:krsuy-point}
       The point $p$ corresponds to $(z_1,\dots,z_n)=(0,\dots,0)$.
  \item\label{p:krsuy-null} 
       $\partial_{z_n}:=\partial/\partial z_n$ gives 
       an extended null vector field on $U$.
  \item\label{p:krsuy-span1}
       If $k\ge 2$, 
       then the $\K$-vector space $T_pS_i$ $(i=1,\ldots,k-1)$ is spanned by
       $\partial_{z_{i+1}},\dots,\partial_{z_{n}}$, 
       that is
       \[
          T_pS_i=\spann{\partial_{z_{i+1}},\dots,\partial_{z_{n}}}, 
       \]
where
       $\partial_{z_{j}}:=\partial/\partial {z_{j}}$, $j=1,\dots,n$.
  \item\label{p:krsuy-span2}
       Suppose $k<n$. 
       If $p$ is $(k+1)$-singular, 
       then  $T_pS_k=\spann{\partial_{z_{k+1}}\dots,\partial_{z_n}}$.
       On the other hand, 
       if $p$ is not $(k+1)$-singular, 
       $T_pS_k=\spann{\partial_{z_k},\dots,\partial_{z_{n-1}}}$.
  \item\label{p:krsuy-init}
       $\hat f(p)=\vect 0$ and  $\nu(p)=(1,0,\ldots,0)$.
  \item\label{p:krsuy-coord}
       $\hat{f}(z_1,\dots,z_n)=
         \left(
           \hat f^{1}(z_1,\dots,z_n),
           \hat f^{2}(z_1,\dots,z_n),z_1,\ldots,z_{n-1}
         \right)$
       for $n\ge 2$
       and 
       $\hat{f}(z_1)=
         \left(
           \hat f^{1}(z_1),
           \hat f^{2}(z_1)\right)$ for $n=1$.
  \item\label{p:krsuy-deriv}
       If $n\ge 2$, 
       $(\hat f^{1})_{z_i}(p)=(\hat f^{2})_{z_i}(p)=0$
       holds for  $i=1,\dots,n-1$.
 \end{enumerate}
\endgroup
\end{lemma}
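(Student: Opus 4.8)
The plan is to normalize $f$ by a single $\K$-affine transformation $\Theta$ of $\K^{n+1}$ and then to read off the coordinate system $(z_1,\dots,z_n)$ almost entirely from the components of $\hat f=\Theta\circ f$ themselves. After shrinking $U$, we may assume by $k$-nondegeneracy that $S_1\supset\dots\supset S_k$ are embedded submanifolds of codimensions $1,\dots,k$, so $\dimK(T_pS_i)=n-i$. Two facts from the earlier part of the paper drive the argument. First, by \eqref{eq:lambda-eta}, $\eta_p\in T_pS_{k-1}$, and $\eta_p\in T_pS_k$ when in addition $p$ is $(k+1)$-singular. Second, writing $W^{\circ}\subset(\K^{n+1})^{*}$ for the annihilator of a subspace $W$, the fact that each $S_i$ $(1\le i\le k)$ is cut out of $S_{i-1}$ transversally by $\lambda^{(i-1)}$ gives $(T_pS_i)^{\circ}=(T_pS_{i-1})^{\circ}\oplus\K(d\lambda^{(i-1)})_p$ (with $S_0:=U$), while $\ker df_p$ is a line by Lemma~\ref{lem:non-deg-indep}. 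For $n=1$ everything below involving the $S_i$ is vacuous and only the affine normalization is needed; so assume $n\ge 2$.

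For the affine step I would take $\Theta$ to be the translation carrying $f(p)$ to $\vect 0$ followed by a linear $A\in GL(n+1,\K)$ whose rows $r_1,\dots,r_{n+1}$ are chosen as follows. Since $\inner{df_p(u)}{\nu(p)}=0$ for all $u$ and $\dimK(\Image df_p)=n-1$, the orthogonal complement $(\Image df_p)^{\perp}$ is $2$-dimensional and contains $\nu(p)\ne\vect0$; put $r_1:=\nu(p)$ and pick $r_2\in(\Image df_p)^{\perp}$ independent of $r_1$. The pull-back $(df_p)^{*}$ induces an isomorphism $(\Image df_p)^{*}\cong(\ker df_p)^{\circ}=(\K\eta_p)^{\circ}$, and the latter space contains the flag $(T_pS_1)^{\circ}\subset\dots\subset(T_pS_{k-1})^{\circ}$ (together with $(T_pS_k)^{\circ}$ when $p$ is $(k+1)$-singular, since then $\eta_p\in T_pS_k$). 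Hence I can pick $r_3,\dots,r_{n+1}$ so that the induced covectors $(d\hat f^{j})_p=(df_p)^{*}r_j$ $(j=3,\dots,n+1)$ form a basis of $(\K\eta_p)^{\circ}$ adapted to this flag, i.e.\ $\operatorname{Span}_{\K}\{(d\hat f^{3})_p,\dots,(d\hat f^{i+2})_p\}=(T_pS_i)^{\circ}$ for each relevant $i$. A short check then shows: $A$ is invertible (because $r_1,r_2$ span $(\Image df_p)^{\perp}$ and $r_3,\dots,r_{n+1}$ are independent modulo it); $\hat f(p)=\vect0$; $\Image d\hat f_p=\operatorname{Span}_{\K}\{e_3,\dots,e_{n+1}\}$, so $(d\hat f^{1})_p=(d\hat f^{2})_p=0$, which is \ref{p:krsuy-deriv}; and the normal $\hat\nu$ of $\hat f$ satisfies $A^{\top}\hat\nu\parallel\nu$, so after rescaling $\hat\nu(p)=(1,0,\dots,0)$, which together with $\hat f(p)=\vect 0$ is \ref{p:krsuy-init}.

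Next I would set $z_j:=\hat f^{j+2}$ for $j=1,\dots,n-1$. Since $(d\hat f^{3})_p,\dots,(d\hat f^{n+1})_p$ are independent, these extend to a $\K$-differentiable coordinate system $(z_1,\dots,z_n)$ centered at $p$; this immediately yields \ref{p:krsuy-point} and \ref{p:krsuy-coord}, and it yields \ref{p:krsuy-span1} because $\operatorname{Span}_{\K}\{dz_1|_p,\dots,dz_i|_p\}=(T_pS_i)^{\circ}$ by the flag adaptation, hence $T_pS_i=\bigcap_{j\le i}\ker(dz_j|_p)=\operatorname{Span}_{\K}\{\partial_{z_{i+1}}|_p,\dots,\partial_{z_n}|_p\}$. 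For the last coordinate: if $k=n$, any completion $z_n$ works; if $k<n$ and $p$ is $(k+1)$-singular, one first extends the flag adaptation to $i=k$ and then any completion gives the first alternative of \ref{p:krsuy-span2}; if $k<n$ and $p$ is not $(k+1)$-singular, then $\lambda^{(k)}(p)\ne0$ by \eqref{eq:lambda-eta}, so $(d\lambda^{(k-1)})_p\notin(\K\eta_p)^{\circ}=\operatorname{Span}_{\K}\{dz_1|_p,\dots,dz_{n-1}|_p\}$, and I would take $z_n$ with $dz_n|_p=(d\lambda^{(k-1)})_p$; then $\operatorname{Span}_{\K}\{dz_1|_p,\dots,dz_{k-1}|_p,dz_n|_p\}=(T_pS_{k-1})^{\circ}\oplus\K(d\lambda^{(k-1)})_p=(T_pS_k)^{\circ}$, giving the second alternative of \ref{p:krsuy-span2}. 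Finally \ref{p:krsuy-null} is automatic: for $q$ near $p$ the kernel of $d(\hat f^{3},\dots,\hat f^{n+1})_q=\bigcap_{j\le n-1}\ker(dz_j|_q)$ is a line (the rank of $d(\hat f^{3},\dots,\hat f^{n+1})$ equals $n-1$ throughout a neighborhood of $p$) and always contains $\partial_{z_n}|_q$; on $S_1$ it also contains $\eta_q$, hence equals $\K\eta_q$, so $\partial_{z_n}$ is a nowhere-vanishing vector field on $U$ whose restriction to $S_1$ spans $\ker df$.

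The step I expect to be the main obstacle is the first one: producing a single affine $\Theta$ that simultaneously puts $\Image d\hat f_p$ and $\nu(p)$ in standard position and makes $(d\hat f^{3})_p,\dots,(d\hat f^{n+1})_p$ adapted to the flag $\{(T_pS_i)^{\circ}\}$, together with checking that these demands do not conflict. They do not, because $(df_p)^{*}$ identifies $(\Image df_p)^{*}$ with $(\K\eta_p)^{\circ}$, which is exactly where the whole flag lives, while the choices $r_1=\nu(p)$ and $r_2$ only constrain the two rows that annihilate $\Image df_p$. The second point worth isolating is the observation that, once $z_1,\dots,z_{n-1}$ are pinned to components of $\hat f$, the coordinate field $\partial_{z_n}$ becomes an extended null vector field for free; this compatibility is what makes the conditions \ref{p:krsuy-null}--\ref{p:krsuy-deriv} hold for one and the same coordinate system.
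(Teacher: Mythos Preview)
Your argument is correct, and it reaches the same conclusion by a genuinely different route than the paper. The paper proceeds in three stages: it first invokes a Frobenius-type lemma (citing Kobayashi--Nomizu) to produce preliminary source coordinates $(w_1,\dots,w_n)$ with $\partial_{w_n}$ parallel to a fixed extended null field $\tilde\eta$ and with $\partial_{w_i}(p)$ adapted to the flag $T_pS_i$; it then applies an affine change of $\K^{n+1}$ sending $f_{w_i}(p)$ to $\vect e_{i+2}$; and finally it uses the implicit function theorem to solve $z_j=f^{j+2}$ and must verify by a computation that $\partial_{z_n}$ remains null on $S_1$. You collapse all of this by front-loading the linear algebra into the choice of $\Theta$: once the rows $r_3,\dots,r_{n+1}$ are chosen so that the covectors $(df_p)^*r_j$ are adapted to the flag $(T_pS_i)^{\circ}\subset(\K\eta_p)^{\circ}$, setting $z_j=\hat f^{j+2}$ gives \ref{p:krsuy-coord} and \ref{p:krsuy-span1} immediately, and the null property \ref{p:krsuy-null} comes for free from the observation that $\bigcap_{j\le n-1}\ker(dz_j|_q)$ is a line containing both $\partial_{z_n}|_q$ and, on $S_1$, the null vector $\eta_q$. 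Your approach thus avoids both the distributions argument and the implicit function theorem, at the cost of a slightly more delicate choice of $A$; the paper's approach is more pedestrian but keeps the affine step trivial. One small slip: you write $W^{\circ}\subset(\K^{n+1})^*$ but then apply the notation to subspaces of $T_pU\cong\K^n$; the intended ambient dual is $(T_pU)^*$.
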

We call a coordinate system on $U$ satisfying
the properties \ref{p:krsuy-point}, \ref{p:krsuy-null}, \ref{p:krsuy-span1}
and \ref{p:krsuy-span2}
a ``$k$-{\em adopted coordinate system}'',
and we call by  
the ``{\em $k$-th KRSUY-coordinate system}''
a pair of coordinate systems on $U$ and $\K^{n+1}$ satisfying
all above conditions.
(In \cite{krsuy}, the existence of the coordinates for $k=1$, $2$ 
and $n=2$ was shown.)

\begin{proof}[Proof of Lemma~\ref{lem:adopted-coord}]
 By replacing $U$ to be a smaller neighborhood if necessary,
 we can take an extended null vector field $\tilde \eta$ on 
 $U$ (see Remark \ref{rem:null}).
 Now we assume that $p\in S_k$ is $k$-nondegenerate.
 Then $S_k$ is a submanifold of $U$ of codimension $k$.
 So we can take a basis $\{\xi_{1},\ldots,\xi_{n-1},\tilde \eta_p\}$
 of $T_{p}U$ 
 such that
 \[
     \begin{cases}
        \spann{\xi_{k},\ldots,\xi_{n-1}}=T_pS_k &
        \text{if $k<n$ and $p$ is not $k$-singular}, \\
        \spann{\xi_{k+1},\ldots,\xi_{n-1},\tilde \eta_{p}}=T_pS_k &
        \text{if $k<n$ and $p$ is $k$-singular}, 
     \end{cases}
 \]
 and
 \[
     \spann{\xi_{i+1},\ldots,\xi_{n-1},\tilde \eta_{p}}=T_pS_i
         \qquad (1\le i \le k-1\quad \text{and}\quad k\ge 2).
 \]
 We now take a local coordinate system $(s_1,s_2,\dots, s_n)$ around $p$
 such that $\partial_{s_i}(p)=\xi_i$ ($i=1,\dots,n-1$),
 and consider the following two involutive distributions 
 $T_1$ and $T_2$ of rank $n-1$ and $1$ respectively
 \[
     T_1(x):=
      \spann{\partial_{s_1}(x),\partial_{s_2}(x),\dots,\partial_{s_{n-1}}(x)},
     \quad
     T_2(x):=\spann{\tilde \eta_x}
     \quad (x\in V),
 \]
 where $V$ is a sufficiently small neighborhood of $p$.
 Since two distributions $T_1(p)$ and $T_2(p)$ span $T_pV$,
 the lemma in \cite[page 182]{KN} yields
 the existence of local coordinate system 
 $(w_1,w_2,\dots,w_n)$ around $p$ such that
 \[
     T_1:=\spann{\partial_{w_1},\partial_{w_2},\dots,\partial_{w_{n-1}}},
     \qquad
     \partial_{w_n}\in \spann{\tilde \eta}(=T_2).
 \]
 Moreover, by a suitable affine transformation
 of $\K^n$, we may assume that
 $w_i(p)=0$ ($i=1,\dots,n$) and 
 $\partial_{w_i}(p)=\xi_i$ ($i=1,\dots,n-1$).
 Replacing $U$ to be sufficiently smaller,
 we may assume that this new coordinate system  $(w_1,\dots,w_n)$ 
 is defined on $U$.
 Moreover, we may reset
 $\tilde \eta=\partial_{w_n}$.
 Thus $(U;w_1,\dots,w_n)$ satisfies
 \ref{p:krsuy-point}, \ref{p:krsuy-null}, \ref{p:krsuy-span1} 
 and \ref{p:krsuy-span2}.
 By a suitable affine transformation of  $\K^{n+1}$,
 we may assume that
 \begin{equation}\label{eq:f-position}
     f(p)=\vect{0},\quad
     \nu(p)=\vect{e}_1,\qquad
     \frac{\partial f}{\partial w_i}(p)=\vect{e}_{i+2}
        \quad (i=1,\dots,n-1),
 \end{equation}
 where $\vect{e}_j$ denotes the $j$-th canonical vector
\begin{equation}\label{eq:e}
      \vect{e}_j:=(0,\dots,0,1,0,\dots,0)\qquad (j=1,2,\dots,n+1)
\end{equation}
 of $\K^{n+1}$. 
 We set $f=(f^1,\dots,f^{n+1})$.
 Then it holds that
 \begin{equation}\label{eq:f-init-deriv}
      \frac{\partial f^{j+2}}{\partial w_l}(p) = \delta^j_l
        \qquad (j,l=1,\dots,n-1),
 \end{equation}
 where $\delta^j_l$ is Kronecker's delta.
 Define new functions by
 \[
    g_j(z_1,\dots,z_{n-1};w_1,\dots,w_n):= f^{j+2}(w_1,\dots,w_n) - z_j
    \qquad (j=1,\dots,n-1).
 \]
 Since $\partial g_j/\partial w_l(0)=\delta^j_l$ for
 ($j,l=1,\dots,n-1$),
 the implicit function theorem yields that 
 there exist functions 
 $\varphi_j(z_1,\dots,z_{n-1},w_n)$ ($j=1,\dots,n-1$) such that
 \[
     g_j\bigl(
        z_1,\dots,z_{n-1},
         \varphi_1(z_1,\dots,z_{n-1},w_n),\dots,
         \varphi_{n-1}(z_1,\dots,z_{n-1},w_n),w_n\bigr)=0.
 \]
 If we set $w_n:=z_n$ and
 \[
     w_j :=\phi_j(z_1,\dots,z_{n-1},w_n) \qquad (j=1,\dots,n-1),
 \]
 then $(z_1,\dots,z_n)$
 gives a new local coordinate system of $U$ around $p$ such that
 \begin{align}
 \label{eq:implicit}
    z_j &= f^{j+2}\bigl(\varphi_1(z_1,\dots,z_{n-1},z_n),
           \dots,\varphi_{n-1}(z_1,\dots,z_{n-1},z_n),z_n\bigr),\\
 \label{eq:jacobi-z}
   \frac{\partial w_j}{\partial z_l}(p) &= \delta^j_l 
    \qquad (j=1,\dots,n-1,~ l=1,\dots,n).
 \end{align}
 Since $\partial_{w_n}$ is the null direction,
 we have $\partial f/\partial w_n=0$.
 Differentiating \eqref{eq:implicit} with respect to $z_n$,
 we have
 \[
     0 = \sum_{l=1}^{n-1}\frac{\partial f^{j+2}}{\partial w_l}
          \frac{\partial \varphi_l}{\partial z_n}
          + \frac{\partial f^{j+2}}{\partial w_n}
       = \sum_{l=1}^{n-1}\frac{\partial f^{j+2}}{\partial w_l}
         \frac{\partial \phi_{l}}{\partial z_n}
     \qquad\text{on $S_1$}.
 \]
 Since the matrix 
 $\bigl(\partial f^{j+2}/\partial w_l\bigr)_{j,l=1,\dots,n-1}$
 is regular near $p$ by \eqref{eq:f-init-deriv},
 \[
     \frac{\partial w_l}{\partial z_n}=
     \frac{\partial\varphi_l}{\partial z_n}=0\qquad
                          \text{(on $S_1$ near $p$)}
 \]
 hold for $l=1,\dots,n-1$.
 In particular, \eqref{eq:jacobi-z} holds for $j=n$.

 Thus if we set 
 \[
     \hat f(z_1,\dots,z_n) :=
       f\bigl(
          \varphi_1(z_1,\dots,z_n),\dots
          \varphi_{n-1}(z_1,\dots,z_n),
           z_n
         \bigr),
 \]
 then 
 \[
      \frac{\partial \hat f}{\partial z_n} 
                = \sum_{j=1}^{n-1}
                      \frac{\partial f}{\partial w_j}
                  \frac{\partial \varphi_j}{\partial z_n}
                   + \frac{\partial f}{\partial w_n} = \vect{0}
           \qquad \text{on $S_1$ near $p$},
 \]
 holds which implies \ref{p:krsuy-null}.
 By \eqref{eq:jacobi-z}, $\partial_{z_j}(p)=\partial_{w_j}(p)$ for
 $j=1,\dots,n$.
 Hence we have \ref{p:krsuy-span1} and \ref{p:krsuy-span2}.
 Moreover,
 \[
    \frac{\partial \hat f}{\partial z_i}(p) = 
       \left(
         \sum_{j=1}^{n-1}\frac{\partial f}{\partial w_j}(p)
                         \frac{\partial w_j}{\partial z_i}(p)
       \right)
      +\frac{\partial f}{\partial w_{n}}(p) 
      =\frac{\partial f}{\partial
                        w_{i}}(p)=\vect{e}_{i+2},
 \]
 which implies \ref{p:krsuy-deriv}.
 Now \ref{p:krsuy-init} and \ref{p:krsuy-coord}
 follow immediately from  \eqref{eq:f-position} and \eqref{eq:implicit} 
 respectively.
\end{proof}

Now we use the notation
${~}'=\partial/\partial {z_n}$ under the $k$-adopted coordinate system.
\begin{lemma}
  \label{lem:adopted-kie}
  Let $p$ be a $k$-nondegenerate singular point of $f$.
  Under a $k$-th adopted coordinate system $(z_{1},\dots,z_{n})$,
  the following hold{\rm :}
 \begin{enumerate}
  \item\label{item:kie1}
        If $k=1$, then $p$ is\ $2$-singular if and only if
    $\lambda'(p)=0$.
    Moreover $p(\in S_2)$ is $2$-nondegenerate 
    if and only if 
    \[
      \bigl(
       \lambda'_{z_2}(p),\ldots,\lambda'_{z_{n-1}}(p),\lambda''(p)
      \bigr)
      \neq
      \vect{0}.
    \]
  \item\label{item:kie2}
        If $k\geq 2$, then $\lambda_{z_{m-1}}^{(m-2)}(p)\ne0$
    and
    \begin{equation}\label{eq:m}
     \lambda^{(m-2)}_{z_m}(p)
      =\dots=
      \lambda_{z_{n-1}}^{(m-2)}(p)=\lambda^{(m-1)}(p)=0
    \end{equation}
    hold for $m=2,\dots,k$.
    Moreover
    $p$ is $(k+1)$-singular $(k<n)$
    if and only if\/ $\lambda^{(k)}(p)=0$.
    Furthermore $p(\in S_{k+1})$ is $(k+1)$-nondegenerate if and
    only if
    \[
       \bigl(
        \lambda^{(k)}_{z_{k+1}}(p),\dots,
            \lambda^{(k)}_{z_{n-1}}(p),\lambda^{(k+1)}(p)
       \bigr)\neq
       \vect{0}.
    \]
 \end{enumerate}
\end{lemma}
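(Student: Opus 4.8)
The plan is to read the lemma off the adapted chart directly: a $k$-adopted coordinate system diagonalizes the flag $T_pU=T_pS_0\supset T_pS_1\supset\dots\supset T_pS_k$ against the directions $\partial_{z_1},\dots,\partial_{z_n}$, and each assertion is just the defining condition for $j$-singularity or $j$-nondegeneracy transcribed into these coordinates. First I would record two preliminaries. Because $k$-nondegeneracy is defined inductively, $p$ is automatically $i$-singular and $i$-nondegenerate for every $i=1,\dots,k$, so that each $S_i$ is near $p$ an embedded hypersurface of $S_{i-1}$ and $\dimK T_pS_i=n-i$ for $i\le k$. Secondly, by \ref{p:krsuy-null} we take $\tilde\eta=\partial_{z_n}$, so under the chart $\lambda^{(l)}=d\lambda^{(l-1)}(\tilde\eta)=\lambda^{(l-1)}_{z_n}$ is the $l$-th $z_n$-derivative of $\lambda$; in particular $\lambda^{(m-2)}_{z_n}=\lambda^{(m-1)}$ and $\lambda^{(k)}_{z_n}=\lambda^{(k+1)}$. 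With these, the first sentence of (1) and the assertion in (2) that $p$ is $(k+1)$-singular if and only if $\lambda^{(k)}(p)=0$ follow immediately from \eqref{eq:lambda-eta}, which gives $S_j=\{\lambda=\dots=\lambda^{(j-1)}=0\}$ near $p$: since $\lambda(p)=\dots=\lambda^{(j-1)}(p)=0$ already holds ($p\in S_j$), being $(j+1)$-singular means exactly $\lambda^{(j)}(p)=0$ --- take $j=1$ for (1) and $j=k$ (with $k<n$) for (2).

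Next I would establish the vanishing relations \eqref{eq:m} together with $\lambda^{(m-2)}_{z_{m-1}}(p)\ne0$ by walking down the flag. The key observation is that $S_i=\{q\in S_{i-1}\,;\,\lambda^{(i-1)}(q)=0\}$ by \eqref{eq:lambda-eta}, and $i$-nondegeneracy says precisely that $\lambda^{(i-1)}|_{S_{i-1}}$ is a submersion at $p$; hence
\[
   T_pS_i=\{v\in T_pS_{i-1}\,;\,(d\lambda^{(i-1)})_p(v)=0\}
\]
is a hyperplane of $T_pS_{i-1}$, for every $i\le k$. Feeding in \ref{p:krsuy-span1} (and $T_pS_0=T_pU=\spann{\partial_{z_1},\dots,\partial_{z_n}}$): for $i=1,\dots,k-1$ we have $T_pS_{i-1}=\spann{\partial_{z_i},\dots,\partial_{z_n}}$ and $T_pS_i=\spann{\partial_{z_{i+1}},\dots,\partial_{z_n}}$, so the displayed description forces $(d\lambda^{(i-1)})_p$ to annihilate $\partial_{z_{i+1}},\dots,\partial_{z_n}$ and, by the codimension-one count, not to annihilate $\partial_{z_i}$; that is, $\lambda^{(i-1)}_{z_i}(p)\ne0$ and $\lambda^{(i-1)}_{z_j}(p)=0$ for all $j>i$. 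Setting $m=i+1$ and using $\lambda^{(m-2)}_{z_n}=\lambda^{(m-1)}$ turns this into $\lambda^{(m-2)}_{z_{m-1}}(p)\ne0$ and \eqref{eq:m} for $m=2,\dots,k$.

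For the nondegeneracy criteria --- the ``moreover'' clause of (1) and the last clause of (2) --- I would use the definition \eqref{eq:not}: $p\,(\in S_2)$ is $2$-nondegenerate iff $T_pS_1\not\subset\ker(d\lambda')_p$, and $p\,(\in S_{k+1})$ is $(k+1)$-nondegenerate iff $T_pS_k\not\subset\ker(d\lambda^{(k)})_p$. Here the relevant input is \ref{p:krsuy-span2}: since in these statements $p$ is $2$-singular (resp.\ $(k+1)$-singular), so $n\ge2$ (resp.\ $k<n$), the chart is chosen so that $T_pS_1=\spann{\partial_{z_2},\dots,\partial_{z_n}}$ (resp.\ $T_pS_k=\spann{\partial_{z_{k+1}},\dots,\partial_{z_n}}$), the first alternative of \ref{p:krsuy-span2}. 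Evaluating $(d\lambda')_p$ (resp.\ $(d\lambda^{(k)})_p$) on that span and using $\lambda'_{z_n}=\lambda''$ (resp.\ $\lambda^{(k)}_{z_n}=\lambda^{(k+1)}$), the non-containment is exactly the non-vanishing of $\bigl(\lambda'_{z_2}(p),\dots,\lambda'_{z_{n-1}}(p),\lambda''(p)\bigr)$ (resp.\ of $\bigl(\lambda^{(k)}_{z_{k+1}}(p),\dots,\lambda^{(k)}_{z_{n-1}}(p),\lambda^{(k+1)}(p)\bigr)$), as claimed.

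I expect no genuine conceptual obstacle; all the real content is in Lemma~\ref{lem:adopted-coord}, and the present lemma merely harvests it. The step needing most care is the flag computation of the second paragraph: one must keep the index shift $m\leftrightarrow i+1$ straight, check at each level that the alternative of \ref{p:krsuy-span1}/\ref{p:krsuy-span2} actually provided by Lemma~\ref{lem:adopted-coord} is the one being used (in particular that $(k+1)$-singularity forces $k<n$, so \ref{p:krsuy-span2} is available), and use the codimension-one dimension count to upgrade ``$(d\lambda^{(i-1)})_p$ vanishes on $T_pS_i$'' to ``$\lambda^{(i-1)}_{z_i}(p)\ne0$''. I would also sanity-check the boundary cases $n=1$ (the flag collapses and (1) reduces to $1$-nondegeneracy of $p$) and $k=n$ (the $(k+1)$-singularity assertions are vacuous by the convention that no point is $(n+1)$-singular), and confirm that the identity $\lambda^{(l)}=\lambda^{(l-1)}_{z_n}$ agrees with the intrinsic definition of $\lambda^{(l)}$.
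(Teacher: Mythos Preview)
Your proposal is correct and follows essentially the same route as the paper: both arguments read off the $(j{+}1)$-singularity condition from \eqref{eq:lambda-eta}, obtain the vanishing \eqref{eq:m} from $T_pS_{m-1}\subset\ker(d\lambda^{(m-2)})_p$ via \ref{p:krsuy-span1}, deduce $\lambda^{(m-2)}_{z_{m-1}}(p)\ne0$ from $(m-1)$-nondegeneracy \eqref{eq:not}, and translate the $(k{+}1)$-nondegeneracy condition through the span description of $T_pS_k$. If anything, your write-up is slightly more careful than the paper's in one place: for $T_pS_k$ in the $(k{+}1)$-singular case you correctly invoke \ref{p:krsuy-span2}, whereas the paper's proof cites \ref{p:krsuy-span1} (which only handles $T_pS_i$ for $i\le k-1$).
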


\begin{proof}
 When $1 \le k <n$, $p$ is $(k+1)$-singular if and only if
 $\lambda^{(k)}(p)=0$ by \eqref{eq:lambda-eta}.
 Suppose now $p$ is $(k+1)$-singular ($k<n$).
 By definition, $p$ is $(k+1)$-nondegenerate if and only if
 $T_pS_{k}\not \subset \ker(d\lambda^{(k)})_p$.
 Since 
 $\spann{\partial_{z_{k+1}},\dots,\partial_{z_{n}}} =T_pS_{k}$ 
 by \ref{p:krsuy-span1},
 this is equivalent to
 $\bigl(
   \lambda^{(k)}_{z_{k+1}},\dots,\lambda^{(k)}_{z_{n-1}},
    \lambda^{(k+1)}\bigr)(p)\neq\vect{0}$,
 which proves \ref{item:kie1} and the latter part of \ref{item:kie2}.
 Next, we assume $2\le m\le k\le n$
 and prove the first part of \ref{item:kie2}:
 By \eqref{eq:lambda-eta},
 we have
 $S_{m-1}=\{q\in S_{m-2}\,;\,\lambda^{(m-2)}(q)=0\}$.
 By the assumption \ref{p:krsuy-span1}, we have
 $T_pS_{m-1}=\spann{\partial_{z_m},\dots,\partial_{z_n}}$.
 In particular, $\lambda^{(m-2)}$ is constant along these
 directions, which implies \eqref{eq:m}. 
 On the other hand, since $k\ge m\ge 2$,
 $p$ is $(m-1)$-nondegenerate.
 By \eqref{eq:not}, we have
 $T_pS_{m-2}\not \subset \ker(d\lambda^{(m-2)})_p$,
 which implies $\lambda_{z_{m-1}}^{(m-2)}(p)\neq 0$
 because of \eqref{eq:m}.
\end{proof}

\begin{lemma}\label{lem:adopted-f}
 Suppose that $p\in U$ is a singular point of $f$
 which is  $k$-nondegenerate but not $(k+1)$-singular.
 Then under the $k$-th adopted coordinate system,
 $f^{(k+1)}(p)\ne\vect{0}$ holds. 
 Moreover, if $k\ge 2$, then $f^{(m-1)}_{z_{m-1}}(p)\ne\vect{0}$ and
 \begin{equation}\label{eq:zf}
    f^{(m-1)}_{z_m}(p)=\dots=f^{(m-1)}_{z_{n-1}}(p)=f^{(m)}(p)=\vect{0}
 \end{equation}
 hold for $m=2,\dots,k$.
\end{lemma}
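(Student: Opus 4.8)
The plan is to work throughout in the fixed $k$-adopted coordinate system $(z_1,\ldots,z_n)$, so that $\tilde\eta=\partial/\partial z_n$ by \ref{p:krsuy-null} and hence $f^{(m)}=\partial^m f/\partial z_n^m$; in particular $f^{(1)}=f_{z_n}$. We may take $\lambda=\det(f_{z_1},\ldots,f_{z_n},\nu)$ in these coordinates (this differs from the $\lambda$ of Section~\ref{sec:criteria} only by a non-vanishing factor, which changes none of the conclusions of Lemma~\ref{lem:adopted-kie} that we invoke). Let $w$ be the $\K$-differentiable vector field on $U$ determined by $\inner{v}{w}=\det(f_{z_1},\ldots,f_{z_{n-1}},v,\nu)$ for every $v\in\K^{n+1}$ (a generalized cross product of $f_{z_1},\ldots,f_{z_{n-1}},\nu$). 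Then $\lambda=\inner{f^{(1)}}{w}$ as an identity of functions on $U$. Two preliminary remarks: first, $f^{(1)}(p)=\cdots=f^{(k)}(p)=\vect 0$ by \eqref{eq:lambda-eta} (since $p\in S_k$), which already gives $f^{(m)}(p)=\vect 0$ in \eqref{eq:zf}; second, $w(p)\ne\vect 0$, for otherwise differentiating $\lambda=\inner{f^{(1)}}{w}$ and using $f^{(1)}(p)=\vect 0$ would give $(d\lambda)_p=0$, contradicting $1$-nondegeneracy. Consequently $\inner{v}{w(p)}\ne 0$ forces $v\ne\vect 0$.

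For the vanishing relations $f^{(m-1)}_{z_m}(p)=\cdots=f^{(m-1)}_{z_{n-1}}(p)=\vect 0$ in \eqref{eq:zf} ($2\le m\le k$): since $f^{(m-1)}\equiv\vect 0$ on $S_{m-1}$ by \eqref{eq:lambda-eta}, the differential $(df^{(m-1)})_p$ annihilates $T_pS_{m-1}=\spann{\partial_{z_m},\ldots,\partial_{z_n}}$ by \ref{p:krsuy-span1} (valid since $k\ge2$ and $1\le m-1\le k-1$), so $f^{(m-1)}_{z_l}(p)=\vect 0$ for $l=m,\ldots,n$. The same argument applied to $S_{a+1}$, for $0\le a\le k-2$, yields the auxiliary fact that $f^{(a+1)}_{z_l}(p)=\vect 0$ whenever $l\ge a+2$; this is used below.

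It remains to prove the two non-vanishing assertions, and here I would differentiate $\lambda=\inner{f^{(1)}}{w}$ by the Leibniz rule: $\lambda^{(m)}=\sum_{j=0}^{m}\binom{m}{j}\inner{f^{(j+1)}}{w^{(m-j)}}$. Taking $m=k$ and evaluating at $p$, all terms with $j\le k-1$ vanish because $f^{(j+1)}(p)=\vect 0$, so $\lambda^{(k)}(p)=\inner{f^{(k+1)}(p)}{w(p)}$. When $k<n$, the hypothesis that $p$ is not $(k+1)$-singular gives $\lambda^{(k)}(p)\ne0$ by Lemma~\ref{lem:adopted-kie}; when $k=n$, the $n$-nondegeneracy of $p$ together with $T_pS_{n-1}=\spann{\partial_{z_n}}$ (from \ref{p:krsuy-span1}) gives $\lambda^{(n)}(p)=\lambda^{(n-1)}_{z_n}(p)\ne0$. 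Either way $\inner{f^{(k+1)}(p)}{w(p)}\ne0$, hence $f^{(k+1)}(p)\ne\vect 0$. For the last assertion (with $k\ge2$ and $2\le m\le k$), I would differentiate the Leibniz expansion $\lambda^{(m-2)}=\sum_{i=0}^{m-2}\binom{m-2}{i}\inner{f^{(i+1)}}{w^{(m-2-i)}}$ once more in the $z_{m-1}$-direction and evaluate at $p$: every term containing an undifferentiated $f^{(i+1)}$ with $i\le m-2$ dies because $i+1\le k$, and every term containing $f^{(i+1)}_{z_{m-1}}$ with $i\le m-3$ dies by the auxiliary fact, since then $m-1\ge i+2$. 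Only $i=m-2$ contributes, giving $\lambda^{(m-2)}_{z_{m-1}}(p)=\inner{f^{(m-1)}_{z_{m-1}}(p)}{w(p)}$; as the left-hand side is non-zero by Lemma~\ref{lem:adopted-kie}, we get $f^{(m-1)}_{z_{m-1}}(p)\ne\vect 0$.

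The only real work is the bookkeeping in the double Leibniz expansion of $\lambda^{(m-2)}_{z_{m-1}}$: one must check that exactly one term survives at $p$, and the index constraints needed for this — $i+1\le k$ so that $f^{(i+1)}(p)=\vect 0$, and $i+1\le k-1$ together with $m-1\ge i+2$ so that $f^{(i+1)}_{z_{m-1}}(p)=\vect 0$ — are tight but compatible in the range $2\le m\le k$. Everything else reduces to \eqref{eq:lambda-eta}, \ref{p:krsuy-span1}, Lemma~\ref{lem:adopted-kie}, and the non-degeneracy of $w(p)$.
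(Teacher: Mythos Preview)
Your proof is correct and follows essentially the same approach as the paper's: both reduce to the determinantal identity $\lambda^{(m-2)}_{z_{m-1}}(p)=\det(f_{z_1},\dots,f_{z_{n-1}},f^{(m-1)}_{z_{m-1}},\nu)(p)$, which you package via the auxiliary vector $w$ and an explicit Leibniz expansion while the paper states it in one line. The only difference is that the paper cites \eqref{eq:lambda-eta} directly (namely $p\in S_k\setminus S_{k+1}$) for the non-vanishing $f^{(k+1)}(p)\ne\vect{0}$, whereas you re-derive this via $\lambda^{(k)}(p)=\inner{f^{(k+1)}(p)}{w(p)}$ --- which has the side benefit of handling the edge case $k=n$ transparently.
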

\begin{proof}
 Since $p\in S_k\setminus S_{k+1}$, \eqref{eq:lambda-eta}
 yields that
 \[
    f'(p)=\dots=f^{(k)}(p)=\vect{0},\quad f^{(k+1)}(p)\ne \vect{0}.
 \]
 Suppose that $k\geq m\geq2$. 
 Then $p$ is $m$-nondegenerate.
 By \eqref{eq:lambda-eta}, 
 the map $f^{(m-1)}$ vanishes along $S_{m-1}$.
 Since 
 $T_pS_{m-1}$ is spanned by $\partial_{z_m},\dots,\partial_{z_n}$
 because of \ref{p:krsuy-span1},
  we have \eqref{eq:zf}.
 On the other hand, since $f'=\dots =f^{(m-1)}=\vect{0}$ at $p$,
 Lemma~\ref{lem:adopted-kie} yields that
 \[
    0\ne \lambda_{z_{m-1}}^{(m-2)}(p)
     =\det(f_{z_1},\cdots,f_{z_{n-1}},f_{z_{m-1}}^{(m-1)},\nu)(p),
 \]
 which implies that $f_{z_{m-1}}^{(m-1)}(p)\ne \vect 0$.
\end{proof}
%%%%%%%%%%%%%%%%%%%%%%%%%%%%%%%%%%%%%%%%%%%%%%%%%%%%%%%%%%%%%%%
\section{Proof of the criteria}\label{sec:versal}
Let $f\colon{}U\to\K^{n+1}$ be a front and $\nu$ be 
a $\K$-normal vector.
Let $p\in U$ be a singular point of $f$ which is
$k$-nondegenerate but not $(k+1)$-singular.
We denote the $k$-th KRSUY coordinates
by $(U;z_1,\ldots,z_n)$ and
$(\K^{n+1};x_1,\dots,x_{n+1})$. Set
\[
    X:=(x_1,x_2),\quad Y:=(z_1,\dots,z_{n-1}),\quad
    (X,Y)=(x_1,x_2,z_1,\dots,z_{n-1}).
\]
We define a function $\Phi\colon{}\K^{n+1}\times\K\to\K$ by
\begin{equation}\label{eq:Phi-def}
    \Phi(X,Y,z_n) :=
          \inner{\nu(Y,z_n)}{f(Y,z_n)-(X,Y)}.
\end{equation}
Then we have the following
\begin{proposition}\label{lem:discri}
 The discriminant set 
 \[
    D_{\Phi}:=\left\{
                (X,Y)\in \K^{n+1}\,;\, 
                \Phi(X,Y,t)=\frac{\partial \Phi}{\partial t}(X,Y,t)=0
                ~\text{for some $t$}
              \right\}
 \]
 coincides with the image $\Image(f)$ of $f$.
\end{proposition}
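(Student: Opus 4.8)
The plan is to show the two inclusions $\Image(f)\subset D_\Phi$ and $D_\Phi\subset\Image(f)$ separately, exploiting the special form of the KRSUY coordinates, in particular property~\ref{p:krsuy-coord} which says $f(Y,z_n)=\bigl(\hat f^1(Y,z_n),\hat f^2(Y,z_n),z_1,\dots,z_{n-1}\bigr)$ after the affine change. The key observation is that $\Phi$ is, up to the factor coming from $\nu$, nothing but the signed distance of the point $(X,Y)\in\K^{n+1}$ from the tangent hyperplane of $f$ at the parameter value $(Y,z_n)$; so $\Phi=0$ at some $t=z_n$ detects that $(X,Y)$ lies on a tangent hyperplane of $f$, and the extra condition $\partial\Phi/\partial t=0$ pins this down to being the point of tangency, i.e.\ to lying on the wave front itself. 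Since $\nu$ is a normal vector field, $\inner{\nu}{df}=0$ identically; differentiating $\Phi$ with respect to $t=z_n$ and using the last $n-1$ coordinates of $f(Y,z_n)-(X,Y)$ being $(z_1-x_{\cdot},\dots)$ — which vanish precisely on the slice $Y$ fixed, $z_j=$ the last coordinates of $(X,Y)$ — will make the computation transparent.

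First I would prove $\Image(f)\subset D_\Phi$. Fix a parameter point $q=(Y_0,z_n^0)$ and let $(X,Y):=f(q)$. Then $\Phi(X,Y,z_n^0)=\inner{\nu(q)}{f(q)-f(q)}=0$. For the derivative: $\partial_t\Phi(X,Y,t)=\inner{\nu_t(Y,t)}{f(Y,t)-(X,Y)}+\inner{\nu(Y,t)}{f_t(Y,t)}$, where in this last expression $Y$ is held fixed as the first $n-1$ coordinates of the point and $t=z_n$ is the variable. At $t=z_n^0$ the first term is $\inner{\nu(q)}{f(q)-f(q)}=0$, and the second term is $\inner{\nu(q)}{f_{z_n}(q)}=0$ because $\nu$ is the normal vector field of the front $f$ and hence $\inner{\nu}{df(\partial_{z_n})}=0$ everywhere. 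Hence $(X,Y)\in D_\Phi$ with witness $t=z_n^0$, proving the first inclusion.

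Next I would prove $D_\Phi\subset\Image(f)$. Suppose $(X,Y)\in D_\Phi$, so there is $t_0$ with $\Phi(X,Y,t_0)=\partial_t\Phi(X,Y,t_0)=0$. Now I use the precise form of $\Phi$ and of $f$ in the KRSUY coordinates. Write $f(Y,t)-(X,Y)=\bigl(\hat f^1(Y,t)-x_1,\ \hat f^2(Y,t)-x_2,\ 0,\dots,0\bigr)$ by property~\ref{p:krsuy-coord}: the last $n-1$ slots are $z_j-z_j=0$ because the point $(X,Y)$ has its last $n-1$ coordinates equal to $Y=(z_1,\dots,z_{n-1})$ and the map $f(Y,t)$ has, in those slots, exactly $z_1,\dots,z_{n-1}$. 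Thus with $\nu=(\nu^1,\dots,\nu^{n+1})$ we get $\Phi(X,Y,t)=\nu^1(Y,t)\bigl(\hat f^1(Y,t)-x_1\bigr)+\nu^2(Y,t)\bigl(\hat f^2(Y,t)-x_2\bigr)$, a two-term expression. Since $\nu(p)=(1,0,\dots,0)$ by \ref{p:krsuy-init}, shrinking $U$ we may assume $\nu^1\neq 0$ on $U$, so normalize $\nu^1\equiv 1$. Differentiating $\Phi$ in $t$ and using $\inner{\nu}{f_{z_n}}=0$, i.e.\ $1\cdot\hat f^1_t+\nu^2\hat f^2_t + (\text{terms with the vanishing components})=0$ — more precisely $\hat f^1_t+\nu^2\hat f^2_t=0$ on the slice — one finds $\partial_t\Phi = \nu^2_t\,(\hat f^2-x_2) + (1\cdot\hat f^1_t+\nu^2\hat f^2_t) - \nu^2_t\cdot 0 \cdot(\dots)$, and after substituting $\hat f^1_t+\nu^2\hat f^2_t=0$ this reduces to $\partial_t\Phi(X,Y,t)=\nu^2_t(Y,t)\bigl(\hat f^2(Y,t)-x_2\bigr) + \bigl(1+\text{0}\bigr)\cdot 0$; I will need to do this bookkeeping carefully, but the upshot is that the two equations $\Phi=0$, $\partial_t\Phi=0$ become a linear system in $(\hat f^1(Y,t_0)-x_1,\ \hat f^2(Y,t_0)-x_2)$ whose coefficient matrix is $\begin{pmatrix}1 & \nu^2\\ \hat f^1_t & \nu^2_t\end{pmatrix}$ evaluated at $(Y,t_0)$ — and using $\hat f^1_t=-\nu^2\hat f^2_t$ this determinant is $\nu^2_t+\nu^2\hat f^2_t\nu^2$, which I must argue is nonzero near $p$; at $p$ itself one checks $\nu^2(p)=0$ so the determinant there is $\nu^2_t(p)$, nonzero because $p$ is a singular point where the tangent hyperplane genuinely turns. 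Once the coefficient matrix is invertible, the only solution is $\hat f^1(Y,t_0)=x_1$, $\hat f^2(Y,t_0)=x_2$, which together with the last $n-1$ coordinates already matching $Y$ gives $(X,Y)=f(Y,t_0)\in\Image(f)$.

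The main obstacle I anticipate is the second inclusion, specifically verifying that the $2\times2$ coefficient matrix above is nonsingular on a neighborhood of $p$ — the vanishing-of-determinant case would correspond precisely to $t_0$ being a point of higher-order tangency not captured by the first derivative, and one has to use either $1$-nondegeneracy of $p$ or just the first-order structure of the front ($f$ being a front, not merely a map, means the Legendrian lift is an immersion, which forces $(\nu^2,\nu^2_t)$ and $(\hat f^1_t,\hat f^2_t)$ not to be simultaneously degenerate). I would handle this by noting that $f$ being a front means $(f,[\nu])$ is a Legendrian immersion, so along the $z_n$-direction the pair $\bigl(\hat f^1_{z_n},\hat f^2_{z_n},\nu^2_{z_n}\bigr)$ cannot all vanish simultaneously with the relevant combination, giving the needed non-vanishing after possibly shrinking $U$; the cleaner route may be to invoke Lemma~\ref{lem:adopted-f} to control $f^{(k+1)}(p)\ne\vect 0$ and bootstrap from there. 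All other steps are direct substitutions using \ref{p:krsuy-coord}, \ref{p:krsuy-init}, and the defining identity $\inner{\nu}{df}=0$.
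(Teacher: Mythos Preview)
Your approach is the paper's, but you have made the bookkeeping harder than necessary and introduced a computational slip. The paper does not normalize $\nu^1\equiv 1$; it simply observes that $\inner{\nu}{f_{z_n}}=0$ gives $\Phi'=\inner{\nu'}{f-(X,Y)}$ outright, so in the KRSUY coordinates (where the last $n-1$ components of $f-(X,Y)$ vanish) the system $\Phi=\Phi'=0$ is linear in $(\hat f^1-x_1,\hat f^2-x_2)$ with coefficient matrix $\bigl(\begin{smallmatrix}\nu^1 & \nu^2\\ (\nu^1)' & (\nu^2)'\end{smallmatrix}\bigr)$. The nondegeneracy of this matrix is isolated as a separate lemma (Lemma~\ref{lem:lin-indep}), proved using only that $f$ is a front: at $p$ one has $f'(p)=\vect 0$, so the Legendrian immersion condition forces $\nu'(p)\notin\spann{\nu(p)}$, and the identity $\inner{\nu'}{f_{z_j}}=-\inner{\nu}{(f_{z_j})'}=\inner{\nu_{z_j}}{f'}=0$ at $p$ shows $\nu'(p)\notin\spann{f_{z_1}(p),\dots,f_{z_{n-1}}(p),\nu(p)}$; since $f_{z_j}(p)=\vect e_{j+2}$, the first two components of $\nu,\nu'$ are then independent. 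Your $2\times2$ matrix is miscomputed: after your normalization $\nu^1\equiv1$ and the substitution $\hat f^1_t+\nu^2\hat f^2_t=0$, the second row of the system in $(\hat f^1-x_1,\hat f^2-x_2)$ is $(0,\nu^2_t)$, not $(\hat f^1_t,\nu^2_t)$. Finally, your fallback to Lemma~\ref{lem:adopted-f} and $k$-nondegeneracy is a red herring --- only the front (Legendrian immersion) condition is used here, no higher-order hypothesis.
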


\begin{proof}
 One can easily prove that $D_{\Phi}\supset\Image(f)$.
 Next we show $D_{\Phi}\subset\Image(f)$.
 Suppose  $(X,Y)\in D_{\Phi}$.
 Since $\inner{f_{z_n}}{\nu}=0$,
 $(X,Y)\in D_{\Phi}$ is equivalent to
 \begin{equation}\label{eq:lem-dis1}
    \inner{\nu}{f-(X,Y)}=0 \quad \text{and}\quad
    \inner{\nu'}{f-(X,Y)}=0.
 \end{equation}
 Since we are using the $k$-th KRSUY-coordinates,
 \eqref{eq:lem-dis1} reduces to
 \begin{equation}\label{eq:lem-dis2}
    \inner{\nu}{(f^1-x_1,f^2-x_2,0,\dots,0)}=
     \inner{\nu'}{(f^1-x_1,f^2-x_2,0,\dots,0)}=0.
 \end{equation}
 By the following Lemma~\ref{lem:lin-indep},
 the first two components of $\nu,\nu'$ are linearly independent near 
 $p$ as vectors in $(\K^2;x_1,x_2)$.
 Hence by \eqref{eq:lem-dis2},
 we have  $x_1=f^1$, $x_2=f^2$.
 Since
 \[
    (X,Y)=(f^1(Y,z_n),f^2(Y,z_n),Y)=f(Y,z_n) \in\Image(f)
 \]
 holds, we have $D_{\Phi}\subset\Image(f)$.
\end{proof}
\begin{lemma}\label{lem:lin-indep}
 The  vectors $f_{z_1},\ldots,f_{z_{n-1}},\nu,\nu'$ are
 linearly independent near $p$.
 Moreover, if we write $\nu=(\nu^1,\dots,\nu^{n+1})$, then
 $(\nu^1,\nu^2)$ and $\bigl((\nu^1)',(\nu^2)'\bigr)$
 are linearly independent in $\K^2$ near $p$.
\end{lemma}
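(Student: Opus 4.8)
The plan is to exploit the $k$-th KRSUY-coordinates together with the structural information on the derivatives $f^{(m)}$ provided by Lemma~\ref{lem:adopted-f}. First I would recall the geometric meaning of the listed vectors: by property \ref{p:krsuy-coord} we have $\hat f=(\hat f^1,\hat f^2,z_1,\dots,z_{n-1})$, so $f_{z_i}=(\hat f^1_{z_i},\hat f^2_{z_i},\vect e_i)$ for $i=1,\dots,n-1$, where $\vect e_i$ denotes the $i$-th standard basis vector of $\K^{n-1}$. Hence $f_{z_1},\dots,f_{z_{n-1}}$ span an $(n-1)$-dimensional subspace whose projection onto the last $n-1$ coordinates is all of $\K^{n-1}$. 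Consequently, a linear combination $\sum_i a_i f_{z_i}+b\nu+c\nu'$ can vanish only if first the last $n-1$ coordinates force $a_i = -(b\nu^{i+2}+c(\nu^{i+2})')$, reducing the whole question to the linear independence of the first two components $(\nu^1,\nu^2)$ and $((\nu^1)',(\nu^2)')$ in $\K^2$. So both assertions of the lemma are in fact equivalent, and it suffices to prove the ``moreover'' part.

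Next I would prove the $2\times 2$ independence. The key relation is $\inner{\nu}{f_{z_i}}=0$ for all $i$, and its $z_n$-derivative $\inner{\nu'}{f_{z_i}}+\inner{\nu}{f_{z_i}'}=0$. Using the coordinate shape of $f_{z_i}$ and writing $\nu=(\nu^1,\nu^2,\nu^3,\dots,\nu^{n+1})$, the equation $\inner{\nu}{f_{z_i}}=0$ reads $\nu^1\hat f^1_{z_i}+\nu^2\hat f^2_{z_i}+\nu^{i+2}=0$, which expresses $\nu^{i+2}$ in terms of $(\nu^1,\nu^2)$ and the known derivatives of $\hat f^1,\hat f^2$. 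Suppose now that at some point near $p$ the pair $(\nu^1,\nu^2)$ and $((\nu^1)',(\nu^2)')$ were linearly dependent; I would derive a contradiction with $k$-nondegeneracy. The cleanest route is to use Lemma~\ref{lem:adopted-kie}: $\lambda=\det(f_{z_1},\dots,f_{z_{n-1}},f',\nu)$ (after the coordinate change making $f^{(1)}=f'$ the $f_{z_n}$-derivative up to the Jacobian factor), and more generally the identity $\lambda^{(j-1)}=\phi\det(f_{z_1},\dots,f_{z_{n-1}},f^{(j)},\nu)$ from Section~\ref{sec:criteria}. Expanding this determinant along the block structure of the $f_{z_i}$, one reduces the $(n+1)\times(n+1)$ determinant to a $2\times 2$ determinant in the $(x_1,x_2)$-plane involving the first two components of $f^{(j)}$ and of $\nu$; differentiating repeatedly in $z_n$ and using \eqref{eq:zf} to kill the lower-order terms, the non-vanishing $\lambda^{(k)}(p)\ne 0$ (guaranteed since $p$ is $k$-nondegenerate but not $(k+1)$-singular, via Lemma~\ref{lem:adopted-kie}) translates into the non-vanishing of exactly the $2\times 2$ determinant $\det\begin{pmatrix}\nu^1 & (\nu^1)'\\ \nu^2 & (\nu^2)'\end{pmatrix}$ at $p$, up to a nonzero factor. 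Since this determinant is $\K$-differentiable, it is nonzero on a neighborhood of $p$, proving the ``moreover'' statement, and hence the lemma.

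I expect the main obstacle to be the bookkeeping in the determinant expansion: carefully tracking which derivatives of $f^{(j)}$ and of $\nu$ survive when one differentiates $\lambda^{(j-1)}=\phi\det(f_{z_1},\dots,f_{z_{n-1}},f^{(j)},\nu)$ in the $z_n$-direction, and verifying via \eqref{eq:zf} and \eqref{eq:m} that all the ``unwanted'' terms (those where a derivative lands on one of the $f_{z_i}$, or where $f^{(j)}$ is not yet differentiated enough to be nonzero) vanish at $p$. An alternative, possibly cleaner, approach avoids determinants entirely: argue directly that $\nu$ and $\nu'$ span a $2$-plane transverse to $\operatorname{Span}\{f_{z_1},\dots,f_{z_{n-1}}\}$ by noting that $\nu\perp f_{z_i}$ and $\nu'\perp f_{z_i}$ would fail only if $\nu,\nu'$ together with the $f_{z_i}$ spanned fewer than $n+1$ dimensions, and then relate this deficiency to the vanishing of all $\lambda^{(j)}(p)$ — which contradicts $k$-nondegeneracy. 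I would present the determinant-reduction version since it makes the equivalence of the two assertions most transparent, but I would keep the argument schematic, invoking \eqref{eq:zf} and Lemma~\ref{lem:adopted-kie} rather than writing out every entry.
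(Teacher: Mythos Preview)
Your reduction of the first assertion to the second (via the block structure of $f_{z_i}$ in KRSUY coordinates) is correct, and the paper uses the same equivalence in the reverse direction. However, your proposed proof of the $2\times 2$ independence has a genuine gap: you never invoke the hypothesis that $f$ is a \emph{front}, i.e., that $L=(f,[\nu])$ is an immersion, and this hypothesis is indispensable. Your claim that $\lambda^{(k)}(p)$ equals, up to a nonzero factor, $\det\bigl(\begin{smallmatrix}\nu^1 & (\nu^1)'\\ \nu^2 & (\nu^2)'\end{smallmatrix}\bigr)(p)$ is simply false. In KRSUY coordinates one has $\lambda^{(k)}(p)=\det(f_{z_1},\dots,f_{z_{n-1}},f^{(k+1)},\nu)(p)$, and since $f_{z_j}(p)=\vect{e}_{j+2}$ and $\nu(p)=\vect{e}_1$ this equals $\pm(\hat f^{2})^{(k+1)}(p)$: it records information about $f^{(k+1)}$, not about $\nu'$. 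When you differentiate $\lambda^{(j-1)}=\det(f_{z_1},\dots,f_{z_{n-1}},f^{(j)},\nu)$ in $z_n$, every term in which the derivative lands on $\nu$ still carries a factor $f^{(j)}$ in another column, and these all vanish at $p$ for $j\le k$; so $\nu'$ never survives. A concrete counterexample: for $n=1$ take $\hat f(t)=(0,t^2)$ with $\nu=(1,0)$. This is a frontal map satisfying all the KRSUY conditions and is $1$-nondegenerate but not $2$-singular ($\lambda=-2t$, $\lambda'(0)=-2\neq 0$), yet $\nu'\equiv(0,0)$, so the $2\times 2$ determinant vanishes identically. The same example kills your ``alternative'' route: a deficiency in $\spann{f_{z_1},\dots,f_{z_{n-1}},\nu,\nu'}$ does \emph{not} force all $\lambda^{(j)}(p)$ to vanish.

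The paper's argument is short and uses the front condition directly. Since $L=(f,[\nu])$ is an immersion and $f'(p)=\vect 0$, one has $\nu'(p)\notin\spann{\nu(p)}$. Next, $\inner{\nu'}{f_{z_i}}=-\inner{\nu}{(f_{z_i})'}=-\inner{\nu}{(f')_{z_i}}=\inner{\nu_{z_i}}{f'}=0$ at $p$ (the last step because $f'(p)=\vect 0$). Hence if $\nu'(p)$ lay in $\spann{f_{z_1},\dots,f_{z_{n-1}},\nu}$, pairing with each $f_{z_j}$ and using the nondegeneracy of the Gram matrix of the $f_{z_j}$ would force $\nu'(p)\in\spann{\nu(p)}$, a contradiction. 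The $2\times 2$ statement then follows from $f_{z_j}(p)=\vect{e}_{j+2}$, exactly by your block-structure observation.
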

\begin{proof}
 Clearly, $f_{z_1},\ldots,f_{z_{n-1}},\nu$
 are linearly independent.
 Since $f$ is a front and $f'(p)=\vect{0}$, 
 we have $\nu'\not\in\spann{\nu}$.
 We assume $\nu'\in\spann{f_{z_1},\ldots,f_{z_{n-1}},\nu}$ 
 at $p$.
 Since 
 $\inner{\nu'}{f_{z_i}} =-\inner{\nu}{f_{z_i}'}
  =\inner{\nu_{z_i}}{f'}=0$,
 $\nu'\in\spann{\nu}$, a contradiction. 
 Moreover, since $f_{z_j}(p)=\vect{e}_{j+2}$ by \ref{p:krsuy-coord},
 $(\nu^1,\nu^2)$ and 
 $\bigl((\nu^1)',(\nu^2)'\bigr)$ are linearly independent.
 Then the second part of the lemma follows.
\end{proof}
\begin{lemma}\label{lem:phi2lam}
 Suppose that $p\in S(f)$  is $k$-nondegenerate
 but not $(k+1)$-singular.
 Under the  $k$-th KRSUY-coordinates at $p$,
 it holds that
 \begin{enumerate}
  \item\label{item:phi2lam:1}
       $\Phi^{(m)}(\vect 0, 0)=0$ $(m=0,\dots, k+1)$
       and  $\Phi^{(k+2)}(\vect{0}, 0)\ne0$, where
       $\vect 0$ is the origin of $\K^{n+1}$,
  \item\label{item:phi2lam:2}
       the Jacobian matrix of the map
       $(\Phi,\Phi',\dots,\Phi^{(k)})
         :\K^{n+2}\to \K^{k+1}$ is of rank $k+1$ at the origin.
 \end{enumerate}
\end{lemma}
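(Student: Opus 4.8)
The plan is to work throughout in the $k$-th KRSUY-coordinates of Lemma~\ref{lem:adopted-coord}, so that ${}':=\partial/\partial z_n$ is the (extended) null direction, $f(p)=\vect{0}$, $\nu(p)=\vect{e}_1$, $f=(f^1,f^2,z_1,\dots,z_{n-1})$, and $(f^r)_{z_i}(p)=0$ for $r=1,2$, $i=1,\dots,n-1$. The key elementary observation is that by \ref{p:krsuy-coord} the last $n-1$ components of $f-(X,Y)$ vanish identically, so $\Phi=\nu^1(f^1-x_1)+\nu^2(f^2-x_2)$ and $\Phi^{(m)}=(\nu^1f^1)^{(m)}+(\nu^2f^2)^{(m)}-x_1(\nu^1)^{(m)}-x_2(\nu^2)^{(m)}$. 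For \ref{item:phi2lam:1} I would set $(X,Y)=\vect{0}$ and reduce to the plane curve $c(t):=\bigl(f^1(\vect{0},t),f^2(\vect{0},t)\bigr)$ together with $n(t):=\bigl(\nu^1(\vect{0},t),\nu^2(\vect{0},t)\bigr)$ in $\K^2$, for which $\Phi(\vect{0},0,t)=\inner{n(t)}{c(t)}$ and $\inner{n(t)}{c'(t)}\equiv0$ (since $\inner{\nu}{f_{z_n}}=0$). As $p\in S_k$, \eqref{eq:lambda-eta} gives $c(0)=c'(0)=\dots=c^{(k)}(0)=\vect{0}$ and $c^{(k+1)}(0)\ne\vect{0}$ by Lemma~\ref{lem:adopted-f}. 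Expanding $\inner{n}{c}$ and the identity $\inner{n}{c'}=0$ by Leibniz's rule and using $n(0)=\vect{e}_1$ then yields $\Phi^{(m)}(\vect{0},0)=0$ for $m\le k+1$, the vanishing of $(f^1)^{(k+1)}(p)$, and $\Phi^{(k+2)}(\vect{0},0)=\inner{n'(0)}{c^{(k+1)}(0)}=(\nu^2)'(p)\,(f^2)^{(k+1)}(p)$, which is non-zero because $(\nu^2)'(p)\ne0$ (Lemma~\ref{lem:lin-indep}) and $(f^2)^{(k+1)}(p)\ne0$ (as $c^{(k+1)}(0)=\bigl(0,(f^2)^{(k+1)}(p)\bigr)\ne\vect{0}$).

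For \ref{item:phi2lam:2} I would evaluate the Jacobian of $(\Phi,\Phi',\dots,\Phi^{(k)})$ at the origin of $\K^{n+2}$. By the above formula, $\partial_{x_r}\Phi^{(m)}(\vect{0},0)=-(\nu^r)^{(m)}(p)$ for $r=1,2$, and $\partial_{z_n}\Phi^{(m)}(\vect{0},0)=\Phi^{(m+1)}(\vect{0},0)=0$ for $0\le m\le k$ by \ref{item:phi2lam:1}; moreover, differentiating $\inner{\nu}{f_{z_j}}=0$ exactly $m$ times along $z_n$, evaluating at $p$, and using $f(p)=f^{(1)}(p)=\dots=f^{(k)}(p)=\vect{0}$ and \ref{p:krsuy-deriv}, one gets $\partial_{z_j}\Phi^{(m)}(\vect{0},0)=-(\nu^{j+2})^{(m)}(p)$ for $j=1,\dots,n-1$. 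Thus the Jacobian at the origin equals, up to the overall sign $-1$, the $(k+1)\times(n+2)$ matrix whose $m$-th row $(0\le m\le k)$ is $\nu^{(m)}(p)$ followed by a $0$ in the $z_n$-slot; hence \ref{item:phi2lam:2} is equivalent to the linear independence of $\nu(p),\nu'(p),\dots,\nu^{(k)}(p)$ in $\K^{n+1}$.

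This independence is the crux and the step I expect to be genuinely delicate, since it cannot come from the Legendrian-immersion hypothesis alone (which only gives $\nu'(p)\notin\spann{\nu(p)}$) and must be drawn from the $j$-nondegeneracies for $j\le k$, entering via Lemma~\ref{lem:adopted-f}. I would establish it by proving that the $(k+1)\times(k+1)$ matrix $\bigl((\nu^l)^{(m)}(p)\bigr)_{0\le m\le k,\;1\le l\le k+1}$ is lower triangular with non-vanishing diagonal. From $\inner{\nu}{f_{z_j}}=0$ one gets the recursion $(\nu^{j+2})^{(m)}(p)=-\sum_{i=0}^{m-1}\binom{m}{i}\inner{\nu^{(i)}(p)}{\partial_{z_j}f^{(m-i)}(p)}$; since $f^{(l)}\equiv\vect{0}$ on $S_l$ and $T_pS_l=\spann{\partial_{z_{l+1}},\dots,\partial_{z_n}}$ by \ref{p:krsuy-span1}, we have $\partial_{z_j}f^{(l)}(p)=\vect{0}$ whenever $l<j$, which already gives $(\nu^l)^{(m)}(p)=0$ for $l\ge m+3$; the remaining off-diagonal case $l=m+2$ reduces to $\partial_{z_m}(f^1)^{(m)}(p)=0$, obtained by differentiating $\inner{\nu}{f'}=0$ exactly $m-1$ times along $z_n$ and once along $z_m$ and evaluating at $p$. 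For the diagonal entry ($j=m-1$) only the terms $i=0,1$ of the recursion survive, and using $\partial_{z_{m-1}}(f^1)^{(m-1)}(p)=0$ together with the identity $\partial_{z_{m-1}}(f^1)^{(m)}(p)=-(m-1)\inner{\nu'(p)}{\partial_{z_{m-1}}f^{(m-1)}(p)}$ (the same trick) one finds
\[
   (\nu^{m+1})^{(m)}(p)=-\inner{\nu'(p)}{\partial_{z_{m-1}}f^{(m-1)}(p)}=-(\nu^2)'(p)\,\partial_{z_{m-1}}(f^2)^{(m-1)}(p),
\]
which is non-zero because $(\nu^2)'(p)\ne0$ (Lemma~\ref{lem:lin-indep}) and $\partial_{z_{m-1}}(f^2)^{(m-1)}(p)$ is the $\vect{e}_2$-component of $f_{z_{m-1}}^{(m-1)}(p)\ne\vect{0}$ (Lemma~\ref{lem:adopted-f}). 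The triangularity then yields a non-zero $(k+1)$-minor of the Jacobian, so its rank is $k+1$.
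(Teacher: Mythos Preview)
Your argument is correct, but it follows a genuinely different route from the paper's proof. The paper introduces the decomposition $\nu' = -a\,f_{z_1}\wedge\dots\wedge f_{z_{n-1}}\wedge\nu + \sum_l b_l f_{z_l} + c\nu$ with $a\neq 0$, derives the key identity $\Phi^{(m+2)}(\vect 0,0)=a\,\lambda^{(m)}(p)$ (and likewise $\Phi^{(m)}_{z_j}(\vect 0,0)=a\,\lambda^{(m-2)}_{z_j}(p)$), and then reads off both \ref{item:phi2lam:1} and the triangular Jacobian directly from Lemma~\ref{lem:adopted-kie}. In other words, the paper reduces everything to the $\lambda$-jets. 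You instead exploit $\inner{\nu}{f_{z_j}}=0$ to obtain $\partial_{z_j}\Phi^{(m)}(\vect 0,0)=-(\nu^{j+2})^{(m)}(p)$, so that \ref{item:phi2lam:2} becomes the linear independence of $\nu(p),\nu'(p),\dots,\nu^{(k)}(p)$; you then establish this independence by a separate triangular argument based on Lemma~\ref{lem:adopted-f}. Your route avoids the wedge-product frame and the function $a$ entirely, at the cost of more Leibniz bookkeeping and a second layer of induction on the $\nu$-jets. The paper's approach is shorter because the bridge to $\lambda$ lets Lemma~\ref{lem:adopted-kie} do all the work at once; your approach has the merit of isolating a clean geometric byproduct (the $A_{k+1}$ condition forces $\nu,\nu',\dots,\nu^{(k)}$ to be independent at $p$) that is not stated in the paper. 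One minor remark: when you invoke $T_pS_l=\spann{\partial_{z_{l+1}},\dots,\partial_{z_n}}$ to get $\partial_{z_j}f^{(l)}(p)=\vect 0$ for $j>l$, note that for $l=k$ the span is actually $\spann{\partial_{z_k},\dots,\partial_{z_{n-1}}}$ by \ref{p:krsuy-span2}; this does not affect your argument since in the relevant range $j\le k-1$, but the justification should cite \ref{p:krsuy-span2} there.
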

\begin{proof}
 By identifying $\bigwedge^n \K^{n+1}$ with $\K^{n+1}$
 with respect to $\inner{~}{~}$,
 $f_{z_1},\dots,f_{z_{n-1}},\nu$ and $f_{z_1}\wedge\dots\wedge
 f_{z_{n-1}}\wedge \nu$  are linearly independent near $p$, 
 where $\wedge$ denotes
 the exterior product.
 Then, one can write
 \begin{equation}\label{eq:nu-prime}
  \nu' = -a f_{z_{1}}\wedge  \dots \wedge
                  f_{z_{n-1}}\wedge \nu + 
          \left(\sum_{l=1}^{n-1} b_l f_{z_l}\right)
          +c\nu,
 \end{equation}
 for some functions $a$, $b_1$, \dots, $b_{n-1}$, $c$ 
 in $(z_1,\dots, z_n)$.
 By Lemma~\ref{lem:lin-indep}, we have
 \begin{equation}\label{eq:alpha}
     a  \neq 0 \qquad \text{near $p$}.
 \end{equation}
 Here, on the singular set $S_1$, we have
 \[
    0 = \inner{\nu_{z_j}}{f'}
      = \inner{\nu'}{f_{z_j}}
      =\sum_{l=1}^{n-1}b_l\inner{f_{z_l}}{f_{z_j}}
      \qquad (j=1,\dots,n-1).
 \]
 Hence we have
 \begin{equation}\label{eq:alpha-beta}
     b_j =0  \quad(j=1,\dots,n-1)
             \qquad \text{on $S_1$ near $p$}.
 \end{equation}
 By the definition of $\lambda$, we can write
 \begin{equation}\label{eq:nu1-f1}
    \inner{\nu'}{f'} = a\lambda + 
               \sum_{l=1}^{n-1}b_l\inner{f_{z_l}}{f'}.
 \end{equation}
 Since $\inner{\nu'}{f'}=0$,
 differentiating \eqref{eq:Phi-def} with respect to $z_n$, we have
 \begin{align}
   \Phi'&=
       \inner{\nu'}{f-(X,Y)}+\inner{\nu}{f'} =
       \inner{\nu'}{f-(X,Y)},\nonumber\\
   \Phi''&=
       \inner{\nu''}{f-(X,Y)}+2\inner{\nu'}{f'} +\inner{\nu}{f''}
        \label{eq:phi-2prime}\\
       &= 
       \inner{\nu''}{f-(X,Y)}+\inner{\nu'}{f'}.
   \nonumber
 \end{align}
 Then by Lemmas~\ref{lem:adopted-kie}, \ref{lem:adopted-f} 
 and  \eqref{eq:nu1-f1},  
 we have
 \begin{equation}\label{eq:diamond}
  \Phi^{(m+2)} = \left(\inner{\nu'}{f'}\right)^{(m)}
           = \left(a\lambda + 
                          \sum_{l=1}^{n-1}b_l
                           \inner{f_{z_l}}{f'}
                          \right)^{(m)}= a\lambda^{(m)}
 \end{equation}
 at $(\vect{0},0)$ for $m=0,\dots,k$, 
 where we used \eqref{eq:alpha-beta} for $m=k$.  
 Since $\lambda^{(m)}(p)=0$ ($m\le k-1$) and
 $\,\lambda^{(k)}(p)\ne 0$,
 \eqref{eq:alpha} yields the first part of the lemma.

 Next, we show the second part:
 Differentiating \eqref{eq:phi-2prime} with respect to $z_j$
 \[
   \Phi''_{z_j} =
    \inner{\nu_{z_j}}{f-(X,Y)}+\inner{\nu''}{f_{z_j}-\vect{e}_{j+2}}+
    \frac{\partial}{\partial z_{j}}\inner{\nu'}{f'},
 \]
 where $\vect{e}_{j+2}$ is defined in \eqref{eq:e}.
 Like as in \eqref{eq:diamond}, 
 applying Lemma~\ref{lem:adopted-kie}, 
 we have
 \[
     \Phi^{(m)}_{z_j}(\vect{0},0) 
           = a \lambda^{(m-2)}_{z_j}
            =\begin{cases}
            0 \qquad &(j>m-1),\\
            a(p) \lambda^{(m-2)}_{z_{m-1}}(p)\neq 0\quad &(j=m-1).
         \end{cases}
 \]
  Since 
 $\Phi_{x_j} = \inner{\nu}{-\vect{e}_j}=-\nu^j$ for $j=1,2$, 
 the Jacobian matrix of $(\Phi,\Phi',\dots,\Phi^{(k)})$ 
 at the origin is written as 
 \[
    \begin{pmatrix}
      -\nu^1 & -\nu^2 & 0 & 0 & \dots & 0 & \dots & 0 \\
      -(\nu^1)' & -(\nu^2)' & 0 & 0 & \dots & 0 &\dots & 0 \\
     *      &  * & a\lambda_{z_1} & 0 & \dots & 0 & \dots &0 \\
     *      &  * & * & a\lambda'_{z_2}& \dots & 0 & \dots &0 \\
     \vdots & \vdots & \vdots & \vdots & \ddots & \vdots & \vdots & \vdots\\
     *      &  * & * &   * &      * & 
     a\lambda^{k}_{z_{k-1}} &\dots & 0 
    \end{pmatrix},
 \]
 which is of rank $k+1$.
\end{proof}
\begin{proof}[Proof of Theorem~\ref{thm:criteria}]
 It can be straightforwardly checked  that the singular point
 given in \eqref{eq:ak-def} is $k$-nondegenerate, but not
 $(k+1)$-singular.
 We now prove the converse.
 By Zakalyukin's theorem \cite{zaka} (see also the appendix of
 \cite{krsuy}), 
 $\Image(f)$ is locally diffeomorphic to the standard
 $A_{k+1}$-singularity if and only if it is $\K$-right-left equivalent
 to \eqref{eq:ak-def}, whenever
 the regular points of $f$ are dense.
 Thus, it is sufficient to show the versality of $\Phi$
 (which implies that $D_\Phi$ is locally diffeomorphic
 to the standard $A_{k+1}$-singularity, see \cite{bruce-giblin}).
 In fact, this is evident by Lemma~\ref{lem:phi2lam}.
\end{proof}

\begin{remark*}
 In \cite{krsuy}, versal unfoldings of 
 a  cuspidal edge and a swallowtail in $\R^3$ are given. 
 The above versal unfolding $\Phi$ is 
 much simpler than those in \cite{krsuy}.
\end{remark*}

Now, we give  proofs of Corollaries \ref{cor:actual},
\ref{cor:submanifold1} and \ref{cor:submanifold2}
using Theorem~\ref{thm:criteria}.
\begin{proof}[Proof of Corollary \ref{cor:actual}]
 The necessary part is obvious by Lemma \ref{lem:adopted-kie}.
 We assume that $\lambda=\lambda'=\cdots=\lambda^{(k)}=0$,
 $\lambda^{(k+1)}\ne0$ at $p$ and
 the map $(\lambda,\lambda',\ldots,\lambda^{(k)})$
 is non-singular at $p$.
 Then we can see that $p$ is 1-nondegenerate, so we can
 take the $1$-st adopted coordinate system.
 By Lemma \ref{lem:adopted-kie},
 $p$ is $2$-singular if $k\geq2$.
 Since $(\lambda,\lambda')$ is full rank,
 $p$ is $2$-nondegenerate.
 We can continue this argument $k$ times.
 Then we see that $p$ is $k$-nondegenerate
 but not $(k+1)$-singular.
 By Theorem~\ref{thm:criteria}, we have the conclusion.
\end{proof}
\begin{proof}[Proof of Corollary \ref{cor:submanifold1}]
 By Lemma~\ref{lem:zero}, $p$ is $2$-singular if and only if 
 $\eta\in T_pS(f)=\spann{v_1,\dots,v_{n-1}}$, which 
 is equivalent to $\mu(p)=0$.
 Hence $p$ is $A_2$-singular point if and only if $\mu(p)\neq 0$.

 Since the condition does not depend on the choice of coordinate
 system and a frame $\{v_1,\dots,v_{n-1}\}$ on $S(f)$,
 we take a coordinate system $(x_1,\dots,x_n)$ of $U$ 
 such that $x_n=\lambda$ and set $v_j=\partial/\partial x_j$
 ($j=1,\dots,n-1$).
 Then the singular set $S(f)$ is given as a hyperplane $x_n=0$, and
 $\mu=\eta_n$ holds on $S(f)$, where $\eta=(\eta_1,\dots,\eta_n)$.
 On the other hand, $\lambda'=d\lambda(\eta)=dx_n(\eta)=\eta_n$,
 and then we have
 $\lambda'=\mu$ on $S(f)$.
 Then, if $p$ is $2$-singular, it is $2$-nondegenerate if and only if
 $d\lambda'(T_p S(f))=d\mu(T_pS(f))\neq 0$.
 Moreover, since 
 \[
    S_2=\{q\in S(f)\,;\, \lambda'(q)=0\}=\{q\in S(f)\,;\,\mu(q)=0\},
 \]
 $p$ is $3$-singular if and only if $d\mu(\eta_p)=0$.
 Thus, we have a criterion for $A_3$-singularities.
\end{proof}

\begin{proof}[Proof of Corollary~\ref{cor:submanifold2}]
 Since $p$ is $2$-nondegenerate, $d\mu_p\neq 0$ holds by 
 Corollary~\ref{cor:submanifold1}.
 Hence, in addition to the previous proof, one can take a coordinate
 system $(x_1,\dots,x_n)$ on $U$ such that $\lambda=x_n$ and
 $\mu=x_{n-1}$ on $S(f)$.
 Then the set $S_2$ is a linear subspace  $\{(x_1,\dots,x_{n-2},0,0)\}$
 near $p$.
 As seen in the previous proof, $\lambda'=\mu$ holds on $S(f)$,
 and hence the condition \eqref{eq:actual-1} in
 Corollary~\ref{cor:actual} is equivalent to \eqref{eq:mu-cond}
 under the assumption that $p$ is $2$-singular.
 Moreover, since $\lambda''=\mu'$ on $S_2$, 
 the Jacobian matrix of the map $\Lambda$ in Corollary~\ref{cor:actual}
 is computed as
 \[
 \begin{small}
     \begin{pmatrix}
      \lambda_{x_1} &  \dots & \lambda_{x_{n-2}} &
      \lambda_{x_{n-1}} &
      \lambda_{x_{n}}\\
      \lambda'_{x_1}& \dots & \lambda'_{x_{n-2}} 
      & \lambda'_{x_{n-1}} &
      \lambda'_{x_{n}}\\
      \lambda''_{x_1}& \dots & \lambda''_{x_{n-2}} 
      & \lambda''_{x_{n-1}} &
      \lambda''_{x_{n}}\\
        \vdots & \ddots &\vdots & \vdots & \vdots\\
      \lambda^{(k-1)}_{x_1} & \dots 
      & \lambda^{(k-1)}_{x_{n-2}} & \lambda^{(k-1)}_{x_{n-2}}
      & \lambda^{(k-1)}_{x_{n}}
     \end{pmatrix}
     = 
     \begin{pmatrix}
       0 &  \dots&0 & 0 & 1 \\
       0 &  \dots&0 & 1 & * \\
       \mu'_{x_1} & \dots & \mu'_{x_{n-2}}
       & * & * \\
        \vdots &  \ddots &\vdots & \vdots & \vdots\\
      \mu^{(k-1)}_{x_1} & \dots & \mu^{(k-1)}_{x_{n-2}}
       & * & * 
     \end{pmatrix}
  \end{small}
 \]
 holds at $p$.  Hence the conclusion follows.
\end{proof}

\begin{example}[Tangent developables in $\K^4$]
 Let $\gamma(z)$ be a $\K$-differentiable curve
 in $\K^4$ such that $\gamma',\gamma'',\gamma''',\gamma''''$
 are linearly independent.
 We consider a hypersurface
 $f(z,u,v)=\gamma(z)+u\gamma'(z)+v\gamma''(z)$.
 If we set
 $\nu=\gamma'\wedge\gamma''\wedge\gamma'''$,
 then  $f$ is a front with $\K$-normal vector $\nu$.
 Furthermore, we have
 \[
     S_1=\{v=0\},\quad
     S_2=\{u=v=0\},\quad
     \eta=(-1,1,u)\quad\text{and}\quad
     \lambda=v.
 \]
 By Corollary~\ref{cor:actual},
 $f$ at $(z,u,0)$ is $\K$-right-left equivalent to $\text{(cusp)}\times\K^2$
 if and only if $u\ne0$, and
 $f$ at $(z,0,0)$ is  $\K$-right-left equivalent to
 $\text{(swallowtail)}\times\K$. 
\end{example}

\section{Zigzag numbers on A-fronts}\label{sec:zigzag}

Let $M^n$ (resp.\  $N^{n+1}$) be a 
manifold of dimension $n$ (resp.\ $n+1$),
and $f\colon{}M^n\to N^{n+1}$ a front.
If all singularities of $f$ are $A_k$-singular points $(k\leq n+1)$, 
it is called an {\em $A$-front}.
In this section, we set $\K=\R$, and discuss
a topological invariant of $A$-fronts.
Now, we assume that $M^n$ is orientable.

For each $A_k$ $(k\leq n+1)$ singular point on an $A$-front,
the image of it coincides with the discriminant set $\{F=F_t=0\}$ of 
\eqref{eq:versal}.
Then we can define the {\em inward normal vector\/} $\nuin$ which points 
in the direction that the number of roots of polynomial $F(t)=0$
increasing.
For example,
on the $A_2$-singularity
$(t,x_2,\dots,x_n)\mapsto(2t^3,-3t^2,x_2,\dots,x_n)$,
$\nuin$ is defined by 
\[
  \nuin=
  \begin{cases}
   - (1,t,0,\dots,0)\quad & (t>0),\\
   \hphantom{-}
     (1,t,0,\dots,0)\quad & (t<0),
  \end{cases}
\]  
see Figure~\ref{fig:zigzag}, left.
Then, the inward direction is uniquely defined on a neighborhood of
each connected component of $S(f)$.
On the other hand, a front $f:M^n \to N^{n+1}$ is called a 
{\em Morin front\/} if there exists a $C^\infty$-map
$\hat f$ from an $(n+1)$-manifold into $N^{n+1}$
such that
\begin{itemize}
 \item $\hat f$ admits only Morin singularities,
 \item  $f$ is the restriction of $\hat f$
to the singular set $S(\hat f)$.
\end{itemize}
\noindent
Morin fronts are all A-fronts 
(see Corollary~\ref{cor:rest-front} and the appendix).
Conversely, any $A$-fronts are locally Morin fronts.
The inward direction of the  Morin front $f$ points in the
direction of the image of $\hat f$, and we get the following:
\begin{proposition}\label{prop:morin-inward}
 A Morin front admits has a unique unit normal vector on the set of
 regular ponits
 which points in the inward direction.
\end{proposition}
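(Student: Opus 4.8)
The plan is to reduce the statement to the local normal forms of Morin maps. Identify $M^n$ with the singular set $S(\hat f)$, so that $f=\hat f|_{S(\hat f)}$ and every point of $M^n$ is a singular point of $\hat f$. By Corollary~\ref{cor:rest-front}, a point $p\in M^n$ is a regular point of $f$ if and only if it is a fold ($A_1$-Morin) point of $\hat f$, and otherwise it is an $A_k$-front singularity of $f$ corresponding to an $A_k$-Morin point of $\hat f$. Write $M^\circ:=M^n\setminus S(f)$ for the set of regular points; having fixed a Riemannian metric on $N^{n+1}$ (needed only to normalize lengths), a unit normal vector field along the immersion $f|_{M^\circ}$ is a nowhere-vanishing section of its (a priori possibly non-orientable) normal line bundle, and two such sections that agree in direction at one point agree everywhere. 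Hence it suffices to prescribe an ``inward'' direction at every point of $M^\circ$ and to verify that this prescription is continuous, hence smooth.

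First I would work at a fold point $p\in M^\circ$ of $\hat f$. Taking a small neighborhood $\tilde V\subset W^{n+1}$ of $p$ and applying the fold normal form, we may assume $\hat f|_{\tilde V}$ is right-left equivalent to $(s,x_1,\dots,x_n)\mapsto(s^2,x_1,\dots,x_n)$, with $S(\hat f)\cap\tilde V=\{s=0\}$ mapped onto the immersed hypersurface $f(M^n)$ near $f(p)$. For $q$ near $f(p)$ the fibre $(\hat f|_{\tilde V})^{-1}(q)$ has two points on one side of $f(M^n)$ and none on the other, and I would \emph{define} $\nuin(p)$ to be the unit normal to $f(M^n)$ at $f(p)$ pointing toward the side on which this local fibre is nonempty. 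This side is the one toward which the local multiplicity of $\hat f$ jumps up; it is therefore intrinsic, independent of the chart $\tilde V$ (shrink two charts to a common one; a normal-form diffeomorphism of $N^{n+1}$ must preserve the side carrying the larger fibre) and unaffected by other sheets of $\hat f$, which lie outside $\tilde V$. Near any $p_0\in M^\circ$ the field $\nuin$ coincides with $\pm$ a fixed smooth unit normal of $f|_{M^\circ}$, the sign being locally constant since the larger-fibre side moves continuously; hence $\nuin$ is smooth on $M^\circ$.

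It then remains to check that this $\nuin$ is compatible with the inward direction defined just before the statement, along the folds that accumulate at a higher singular point. Near an $A_k$-Morin point $p_0$ of $\hat f$, the map $\hat f$ is right-left equivalent to the standard $A_k$-Morin map, whose fibre over a point $q$ is the set of real roots of the polynomial $F(t;u(q))$ of \eqref{eq:versal} for a suitable parameter $u(q)$ and whose singular image is the discriminant $\{F=F_t=0\}$, i.e.\ the image of $f$ near $f(p_0)$. So ``the direction in which $\#(\hat f)^{-1}$ increases'' is exactly ``the direction in which the number of real roots of $F(t)=0$ increases,'' which is the defining property of $\nuin$ in the paragraph preceding this proposition; restricted to the fold points near $p_0$, it equals the $\nuin$ of the previous paragraph. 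Thus the local definitions patch to a single smooth unit normal field $\nuin$ on all of $M^\circ$ pointing inward, and, since an immersed hypersurface in $N^{n+1}$ has exactly two unit normals at each point, it is the unique such field (in particular $f|_{M^\circ}$ is co-orientable).

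The hardest point is this last compatibility step: one must invoke the classical Morin normal-form theorem in the precise form that an $A_k$-Morin singularity has a model whose fibres are the real-root sets of \eqref{eq:versal} and whose discriminant is the front, so that ``larger fibre'' translates to ``more real roots.'' Granting that (it is recorded in the appendix; cf.\ Corollary~\ref{cor:rest-front}), the remaining work — the elementary fold picture and the bookkeeping of chart-independence for the local-multiplicity definition of $\nuin$ — is routine.
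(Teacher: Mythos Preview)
Your argument is correct and is essentially the paper's own approach, only far more explicit. The paper does not give a formal proof at all: it simply records, in the sentence immediately preceding the proposition, that ``the inward direction of the Morin front $f$ points in the direction of the image of $\hat f$,'' and treats the proposition as an immediate consequence. Your fold computation makes this sentence precise (the side with nonempty local fibre is the image side of $\hat f$, hence globally well-defined), and your compatibility check near an $A_k$-Morin point---identifying the local fibre of $\hat f$ with the real-root set of the polynomial \eqref{eq:versal}---is exactly what justifies equating ``toward the image of $\hat f$'' with ``toward more real roots,'' the defining property of $\nuin$. So you and the paper are doing the same thing; you have simply written out what the paper leaves implicit.
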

We fix a regular point $o\in M^n$ of $f$.
A continuous map
$
    \gamma:[0,1]\longrightarrow M^n
$
is called a {\em loop\/} at $o$ if it satisfies $\gamma(0)=\gamma(1)=o$.
We fix a Riemannian metric $g$ of $N^{n+1}$.
A loop $\gamma$ is called {\em co-orientable\/} if there exists a unit
normal vector field $\nu(t)\in T_{\hat\gamma(t)}N^{n+1}$ of 
$f$ along $\gamma$ such that $\nu(0)=\nu(1)$,
where
$\hat \gamma(t):=f\circ \gamma(t)$.
If $\gamma$ is not co-orientable, it is called
{\it non-co-orientable}.
These two properties of loops depend only on their
homotopy classes. 
We set
\[
    \rho_f(\gamma):=
\begin{cases}
0 \quad & (\text{if $\gamma$ is co-orientable}), \\
1 & (\text{if $\gamma$ is non-co-orientable}).
\end{cases}
\]
Then it induces a
representation
$
    \rho_f:\pi_1(M^n)\longrightarrow \Z_2=\Z/2\Z
$.
Next, we set
\[
    \hat\pi_1(M^n):=\ker(\rho_f)\bigl(\subset \pi_1(M^n)\bigr)
\]
which consists of homotopy classes of co-orientable
loops. 
We shall now construct the secondary homotopical representation
$\hat\rho_f:\hat\pi_1(M^n)\longrightarrow \Z$
called the {\it Maslov representation\/}
as follows:
Let $f:M^n\to N^{n+1}$ be a front and
\[
    S(f)=\bigcup_{\alpha\in \Lambda}\Gamma_{\alpha}
\]
be the decomposition of the singular set $S(f)$ by its connected
components.
Then there exists a disjoint family of open domains $O_\alpha$ in
$M^n$ such that $\Gamma_\alpha\subset O_\alpha$.
Let $\gamma:[0,1]\to M^{n}$ be a co-orientable loop at $o\not\in S(f)$.
We set
\[
    S_\gamma:=\{t\in [0,1]\,;\, \gamma(t)\in S(f)\}.
\]
For each $t\in S_{\gamma}$,
there exists an open interval $I_t(\subset (0,1))$ 
such that
$\gamma(I_t)\subset O_\alpha$
for some $\alpha\in \Lambda$.
Since $S_\gamma$ is compact, there exists
a finite number of open intervals 
$I_1=(t^{-}_1,t^{+}_{1}),\dots,
   I_m=(t^{-}_m,t^{+}_m)$
such that
$S_\gamma\subset I_1\cup \cdots \cup I_m$.
If $I_k\cap I_{k+1}$ is non-empty,
we can replace $I_k$ and $I_{k+1}$ by $I_k\cup I_{k+1}$.
After these operations, each $I_k$ is still contained in 
some $O_\alpha$, since $\{O_\alpha\}_{\alpha\in \Lambda}$ 
is disjoint.
Then we may assume that
\[
   0< t^{-}_1< t^{+}_1<
         t^{-}_2<\dots<
     t^{-}_m<
         t^{+}_m<1.
\]
We fix a unit normal vector $\nu_0\in T_{\hat \gamma(o)}N^{n+1}$ at $o$.
(If $f$ admits a globally defined smooth unit normal vector field $\nu$
on $M^n$, one of the canonical choices is $\nu_0=\nu(0)$.)
Let $\nu_j$ ($j=1,2,\dots,m$) be the 
inward unit normal vector at $t=t^{+}_j$.
Then we can take a continuous unit vector field $\hat \nu_j(t)$
($t^+_{j}\le t \le t_{j+1}^-$) along $\gamma|_{[t^+_{j},t_{j+1}^-]}$
such that 
\[
    \hat \nu_i(t^{+}_j)=\nu_j \qquad (i=0,1,2,\dots,m),
\]
where $t^{+}_0=0$ and $t^{-}_{m+1}=1$.
We set $\varepsilon_0=+1$ and
\[
    \varepsilon_j:=
      \begin{cases}
       \hphantom{-}\varepsilon_{j-1} & 
         (\mbox{%
             if $\hat \nu_{j-1}(t^{-}_{j})$ is inward}) \\
       -\varepsilon_{j-1} & 
         (\mbox{%
             if $-\hat \nu_{j-1}(t^{-}_{j})$ is inward}) \\
      \end{cases}
      \qquad (j=1,2,\dots,m+1),
\]
where $\nu_{m+1}=\nu_0$.
Then we get a sequence
\begin{equation}\label{eq:seq}
   \varepsilon_0,~\varepsilon_1,~
   \varepsilon_2,~\dots~,~\varepsilon_m,~\varepsilon_{m+1}.
\end{equation}
Since $M^n$ is orientable, $m$ is an odd integer.
By the co-orientability of $\gamma$,
$\varepsilon_0=\varepsilon_{m+1}=+1$
holds.
For simplicity, we write $\varepsilon_j=+$ (resp.~$-$) if
$\varepsilon_j=+1$ (resp. $-1$).
If there are two successive same signs $++$ or $--$, we cancel them. 
Repeating this cancellation, we get a sequence which has no successive
signs
\[
    +-\cdots+-+.
\]
We shall call this reduction the {\em normalization\/}  of
the sequence \eqref{eq:seq}.
Then the number of $-$ after the normalization
is called the {\em zig-zag number\/} of the  co-orientable 
loop $\gamma$. (The definition of zig-zag number for
surfaces in $\R^3$ is given in  \cite{LLR}. See also \cite{suy}.)
For example, we set $M=S^1$ and $N=\R^2$ and consider a front as in
Figure~\ref{fig:zigzag}, right.
%%%%%%%%%%%%%%%%%%%%%%%%%%%%%%%%%%%%%%%%%%%%%%%%%%%%%%%%%%%%%%%%%
\begin{figure}
\begin{center}
\begin{tabular}{cc}
\raisebox{1cm}{
 \includegraphics{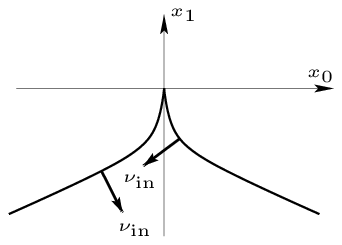}}& 
 \includegraphics{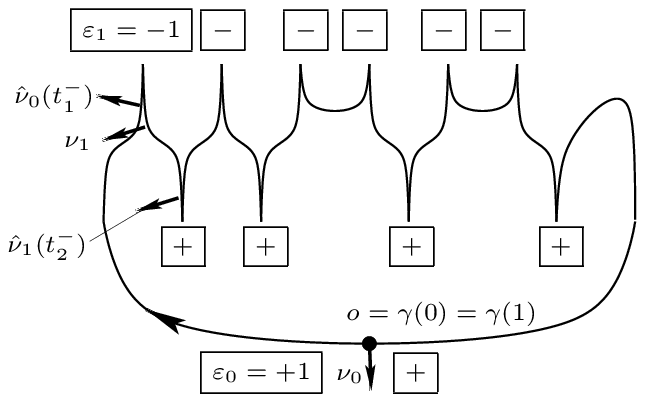} \\
 The inward normal &
 Zigzag number
\end{tabular}
\end{center}
\caption{Zigzag number of a plane curve}\label{fig:zigzag}
\end{figure}
%%%%%%%%%5%%%%%%%%%%%%%%%%%%%%%%%%%%%%%%%%%%%%%%%%%%%%%%%%%%%%%%%5
Then this closed curve induces a
sequence
\[
    +-+-+--+--++
\]
which can be reduced to
\[
   +-+(-(+(--)+)-)-(++)\quad \mbox{that is}\quad +-+-
\]
and thus the zig-zag number in this case is $2$.
We denote by $z(\gamma)$ the zig-zag number
of the co-orientable loop $\gamma$.
(Global properties and references of zig-zag 
numbers of planar curves are given in \cite{A}.)

\begin{proposition}\label{prop:homotopy}
 The zig-zag number $z(\gamma)$
 depends only on the homotopy class of $\gamma$,
 and is stable under the deformation of an $A$-front.
\end{proposition}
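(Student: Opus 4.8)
First I would address well-definedness, which is the real content. The construction of $z(\gamma)$ involved several choices: the covering intervals $I_1,\dots,I_m$, the order in which one reduces/merges them, the auxiliary vector fields $\hat\nu_j$ connecting consecutive singular passages, and the base normal $\nu_0$. I would argue step by step that the final normalized sequence is independent of all of these. The choice of $I_k$'s is forced up to merging overlapping intervals, and merging two overlapping intervals contained in the same $O_\alpha$ only removes a pair of equal adjacent signs $\varepsilon_{j-1},\varepsilon_j$ (since crossing a connected component of the singular set and coming back out with the same co-orientation contributes $++$ or $--$), which is exactly what the normalization step kills; hence the normalized sequence is unchanged. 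Independence of $\hat\nu_j$: two continuous unit normal fields along the regular arc $\gamma|_{[t_j^+,t_{j+1}^-]}$ agreeing at $t_j^+$ must agree throughout, because the normal line bundle of a front along a path in the regular locus is one-dimensional (Lemma~\ref{lem:non-deg-indep} gives $\dim\ker df=1$ at singular points, but on regular arcs the co-orientation propagates uniquely once fixed). Independence of $\nu_0$: replacing $\nu_0$ by $-\nu_0$ flips every $\varepsilon_j$ simultaneously, which does not change the count of $-$ signs after normalization (the pattern $+-+\cdots-+$ becomes $-+-\cdots+-$, same number of minus signs since $m$ is odd forces the normalized length to be odd with one extra $+$... here I must be slightly careful: flipping gives the complementary count, so I would instead normalize the base sign to $+$, which is legitimate precisely because $\gamma$ is co-orientable so $\varepsilon_0=\varepsilon_{m+1}$).

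Next, homotopy invariance. Let $\gamma_s$, $s\in[0,1]$, be a homotopy of co-orientable loops at $o$ (staying co-orientable throughout—co-orientability is itself a homotopy-class property, already noted in the text). The set $S_{\gamma_s}$ and the covering intervals vary, but for a generic homotopy the combinatorial type of the sequence \eqref{eq:seq} changes only through finitely many elementary moves: (i) an interval $I_k$ shrinks and a singular passage disappears in a pair (the loop backs away from a singular component and re-crosses—this creates or destroys a $++$ or $--$ in the unnormalized sequence, hence no change after normalization); (ii) two adjacent intervals exchange order along $[0,1]$ without the sign pattern between them changing (a permutation of blocks that are separated by same-sign runs, again invisible after normalization); (iii) the loop passes through a point where two singular components $\Gamma_\alpha,\Gamma_\beta$ are close, handled by the disjointness of the $O_\alpha$'s. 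I would make this precise by a transversality/genericity argument: the failure locus in the parameter $s$ is a finite set, across which the normalized sequence is constant by the local analysis (i)–(iii). For the degenerate instants one uses that the inward normal $\nuin$ is defined and continuous on a neighborhood of each connected component of $S(f)$ (stated in Section~\ref{sec:zigzag}), so $\varepsilon_j$ is locally constant in $s$ away from the finitely many elementary moves.

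Finally, stability under deformation of an $A$-front. If $f_s:M^n\to N^{n+1}$ is a one-parameter family of $A$-fronts, I fix a loop $\gamma$ (with base point a regular point of all $f_s$, possible locally in $s$), and observe that all ingredients—the singular set $S(f_s)$, its connected-component decomposition, the inward normal $\nuin$, and the co-orientation transport—depend continuously on $s$; since $z_{f_s}(\gamma)$ is integer-valued and, by the homotopy-invariance argument applied fiberwise, changes only through elementary moves that preserve the normalized sequence, it is locally constant in $s$, hence constant. The main obstacle, and where I would spend the most care, is the genericity/transversality bookkeeping in the homotopy argument: enumerating exactly which elementary codimension-one degeneracies can occur in a generic path of loops on a front with $A_k$ singularities, and checking in each case that the unnormalized sign sequence changes only by insertion/deletion of equal adjacent pairs or by block permutations through equal-sign runs. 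Everything else is a matter of unwinding the definitions.
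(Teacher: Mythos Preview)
Your strategy is essentially the same as the paper's, but you supply far more detail than the paper does. The paper's entire proof is a three-line sketch: take a homotopy $\Gamma:[0,1]^2\to M^n$, cover the preimage $S_\Gamma=\Gamma^{-1}(S(f))$ by finitely many open sets each mapped into a single $O_\alpha$ (compactness of the square), and then invoke ``the standard homotopy argument'' without elaboration. Your transversality/elementary-moves analysis is precisely a candidate for what that phrase hides, and your compactness step matches the paper's exactly. The paper says nothing about well-definedness with respect to the choices of $I_j$, $\hat\nu_j$, or $\nu_0$; that discussion is your own addition, and it is the right thing to worry about.

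One caution: your treatment of the $\nu_0$-dependence is not quite right and you sense this yourself. Flipping $\nu_0$ flips only $s_1$ (and, under the paper's convention that $\nu_{m+1}=\nu_0$, also the last comparison), so the interior signs $\varepsilon_1,\dots,\varepsilon_m$ all flip while $\varepsilon_0=\varepsilon_{m+1}=+$ remain; the normalized alternating word can genuinely change length under this operation. The paper does not claim independence of $\nu_0$---indeed it later notes that the sign of the normal-curvature map ``depends on the initial choice of the unit normal vector $\pm\nu_0$''---so you should not try to prove it. Drop that paragraph and the rest of your outline stands.
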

\begin{proof}
 Suppose $[\gamma_0]=[\gamma_1]$ in $\pi_1(M^n)$.
 Then there exists a homotopy $\Gamma:[0,1]\times [0,1]\to M^n$
 such that $\Gamma(t,0)=\gamma_0(t)$
 and $\Gamma(t,1)=\gamma_1(t)$. We set
 \[
      S_\Gamma:=\{(t,s)\in [0,1]\times[0,1]
      \,;\,
      \Gamma(t,s)\in S(f)\}.
 \]
 For each $(t,s)\in S_\Gamma$,
 there exists an open neighborhood
 $W_{t,s}\subset [0,1]\times [0,1]$ of $(t,s)$ such that
 $\Gamma(W_{t,s})\subset O_\lambda$ for some $\lambda\in \Lambda$.
 Since $[0,1]\times [0,1]$ is compact,
 it is covered by just finitely many
 such open subsets.
 Then the standard homotopy argument yields
 the homotopy invariance $z(\gamma_0)=z(\gamma_1)$. 
\end{proof}

A closed $C^1$-regular curve $\gamma:[0,1]\to M^n$ 
starting at $o$
is called a {\em null loop\/}
if $\gamma'(t)$ ($t\in [0,1]$) belongs to the kernel of $df$
whenever $\gamma(t)\in S(f)$.
One can easily show that, for each homotopy class of the fundamental
group $\pi_1(M^n)$,
there exists a null $C^1$-loop which represents the homotopy class.
Moreover, we may assume that $\gamma(t)$ passes through only finitely
many $A_2$-singular points.
Moreover, we may assume that $\gamma(t)$ passes through only finitely
many $A_2$-singular points.

Suppose that $\gamma(t)$ is a co-orientable null loop.
Then there exists a continuous unit normal vector field 
$\nu(t)$ along $\gamma(t)$ such that $\nu(0)=\nu_0$.
Since $\gamma$ is co-orientable, $\nu(1)=\nu_0$ holds.
As in \cite{suy},
we define a continuous map of $[0,1]\setminus S_\gamma$
into  $P^1(\R)$ by
\[
    C_{\gamma}:
    t\longmapsto
    [g(\hat\gamma'(t),\hat\gamma'(t)):g(\nu'_\gamma(t),\hat\gamma'(t))],
\]
which can be extended continuously across $S_\gamma$.
We call this map $C_\gamma:[0,1]\to P^1(\R)$
{\em the normal curvature map}.
In fact, when $\gamma(t)$ is a regular point of $f$,
$-g(\nu'_\gamma,\hat\gamma')/g(\hat\gamma',\hat\gamma')$
is exactly the normal curvature of $f$ along $\gamma$.
This is independent of the orientation of $\gamma$,
but its sign depends on the initial choice of the unit normal vector
$\pm\nu_0$ at $o$.
We denote the rotation index of the map $C_\gamma$
by $\mu_f(\gamma)$.
The absolute value of $\mu_f(\gamma)$ is called the {\em Maslov index\/}
of the co-orientable null loop $\gamma$.
Then by the same argument as in \cite{suy},
we get the following
\begin{proposition}
 The Maslov index $|\mu_f(\gamma)|$ 
 coincides with the zig-zag number $z(\gamma)$.
 In particular, we get a {\rm (}Maslov{\rm )} representation
 \[
    \hat \rho_f:
     \hat \pi_1(M^n) \ni [\gamma]\longmapsto \mu_f(\gamma)
         \in \Z.
 \]
\end{proposition}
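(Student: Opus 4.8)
The plan is to follow the strategy of \cite{suy}: reduce both invariants to contributions localized at the $A_2$-points met by $\gamma$, evaluate each contribution in the $A_2$-normal form, and match the bookkeeping for the rotation index of $C_\gamma$ with the $++/--$ cancellation rule defining $z(\gamma)$. I would begin with two structural observations about $\mu_f$. Being the rotation index of the continuous map $C_\gamma\colon S^1\to P^1(\R)$, it is an integer that is locally constant under continuous deformations of the co-orientable null loop $\gamma$ — the crossing analysis below being local and stable, so $C_\gamma$ varies continuously along a homotopy — hence $\mu_f(\gamma)$ depends only on the homotopy class, and it is plainly additive under concatenation of loops. Moreover, as recalled just before the statement, every class of $\pi_1(M^n)$ is represented by a null $C^1$-loop meeting $S(f)$ in finitely many points, all $A_2$-singular, and at each such point the null direction is transverse to $S(f)$: there $\gamma'$ lies in the one-dimensional kernel of $df$, while an $A_2$-point is not $2$-singular, so by Lemma~\ref{lem:zero} the null vector is not tangent to $S(f)$. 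It therefore suffices to treat such $\gamma$, and for these both $z(\gamma)$ and $\mu_f(\gamma)$ are sums of contributions indexed by the crossings.

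The core step is the local model at one crossing $\gamma(s_0)$. By Theorem~\ref{thm:criteria} I may put $f$ in the normal form $(t,x_2,\dots,x_n)\mapsto(2t^3,-3t^2,x_2,\dots,x_n)$, so that $S(f)=\{t=0\}$, the null direction is $\partial_t$, and — because $A$-fronts are locally Morin, the inward direction being well defined on a neighborhood of each component of $S(f)$ (cf.\ Proposition~\ref{prop:morin-inward}) — the inward normal is $\nuin=-\operatorname{sgn}(t)\,(1,t,0,\dots,0)/\sqrt{1+t^2}$, which changes sign across the cuspidal edge $\{t=0\}$. Since $\gamma$ is a null loop one has $\hat\gamma'(s_0)=\vect 0$ and $\hat\gamma''(s_0)\ne\vect 0$, and a short computation gives $g(\hat\gamma',\hat\gamma')\sim A(s-s_0)^2$ and $g(\nu'_\gamma,\hat\gamma')\sim B(s-s_0)$ with $A>0$ and $B\ne 0$; hence $C_\gamma$ passes transversally through the point $[0:1]\in P^1(\R)$ exactly once at $s_0$, and one reads off that the sign of this passage is $+1$ if $\nu_\gamma$ is inward just before the crossing (hence outward just after, since $\nuin$ flips) and $-1$ otherwise — precisely the datum recorded by comparing $\varepsilon_j$ with $\varepsilon_{j-1}$ in the definition of $z(\gamma)$.

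To finish, one checks that $\mu_f(\gamma)$ equals the signed count $\sum_j\delta_j$ of passages of $C_\gamma$ through $[0:1]$, where $\delta_j=\pm1$ is the sign found above; by the previous paragraph each $\delta_j$ is controlled by the inward/outward state of $\nu_\gamma$ encoded by $\varepsilon_j$, and an elementary argument shows that this signed sum, under the $++/--$ cancellations of the sequence $\varepsilon_0,\varepsilon_1,\dots$, collapses to $\pm z(\gamma)$, so $|\mu_f(\gamma)|=z(\gamma)$. Combined with homotopy invariance and additivity, this shows $[\gamma]\mapsto\mu_f(\gamma)$ descends to a homomorphism $\hat\rho_f\colon\hat\pi_1(M^n)\to\Z$ with $|\hat\rho_f([\gamma])|=z(\gamma)$. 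The delicate point is exactly this last matching: one must align absorbing a $++$ or $--$ pair in the $\varepsilon$-sequence with cancelling two oppositely oriented passages of $C_\gamma$ through $[0:1]$, keeping every orientation convention (the reference normal $\nu_0$, the orientation of $\gamma$, the direction of $\nuin$) consistent; the normal-form computation, though routine, is where the transversality of the passage and its sign must be pinned down.
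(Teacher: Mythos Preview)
Your proposal is correct and follows the same approach as the paper, which itself gives no independent argument but simply defers to ``the same argument as in \cite{suy}''. Your sketch fleshes out exactly that argument --- localizing at the $A_2$-crossings via the normal form, reading off the transversal passage of $C_\gamma$ through $[0:1]$, and matching its sign with the $\varepsilon$-sequence --- so there is nothing to add.
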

Since the inward direction is defined globally for a Morin front,
we have the following:
\begin{corollary}
   The Maslov representations for Morin fronts are trivial.
\end{corollary}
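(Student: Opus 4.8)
The plan is to show that for a Morin front $f\colon M^n\to N^{n+1}$, every co-orientable null loop $\gamma$ has zig-zag number $z(\gamma)=0$, so that by the preceding proposition $\mu_f(\gamma)=0$ for all such loops and hence the representation $\hat\rho_f$ is identically zero. The key input is Proposition~\ref{prop:morin-inward}: a Morin front carries a \emph{globally defined} unit normal vector field on the set of regular points which everywhere points in the inward direction. First I would fix a null loop $\gamma$ representing a class in $\hat\pi_1(M^n)$, with the notation $I_1,\dots,I_m$ and signs $\varepsilon_0,\dots,\varepsilon_{m+1}$ as in the construction of the zig-zag number; since $\gamma$ is co-orientable we already know $\varepsilon_0=\varepsilon_{m+1}=+1$.

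Next I would compare the propagated normal vectors $\hat\nu_j(t)$ with the canonical global inward normal $\nu_{\mathrm{in}}$ coming from Proposition~\ref{prop:morin-inward}. Start with the canonical choice $\nu_0=\nu_{\mathrm{in}}(\gamma(0))$ at the basepoint. On each regular arc $\gamma|_{[t^+_j,t^-_{j+1}]}$ the field $\hat\nu_j(t)$ is a continuous unit normal along $\gamma$ which agrees at $t^+_j$ with the inward vector $\nu_j$, hence — by uniqueness of continuous extension and the fact that $\nu_{\mathrm{in}}$ is itself a continuous unit normal on the regular set — $\hat\nu_j(t)=\nu_{\mathrm{in}}(\gamma(t))$ for all $t$ in that arc. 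In particular $\hat\nu_{j-1}(t^-_j)=\nu_{\mathrm{in}}(\gamma(t^-_j))$, which is inward at the approach to the $j$-th singular interval. Therefore, by the defining recursion for $\varepsilon_j$, the case ``$\hat\nu_{j-1}(t^-_j)$ is inward'' always occurs and $\varepsilon_j=\varepsilon_{j-1}$ for every $j$. Thus the sequence $\varepsilon_0,\dots,\varepsilon_{m+1}$ consists entirely of $+$ signs.

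Once the sequence is all $+$, the normalization procedure cancels every successive pair $++$ and reduces it to the single sign $+$; hence the number of $-$ after normalization is $0$, i.e.\ $z(\gamma)=0$. By the proposition identifying the Maslov index with the zig-zag number, $|\mu_f(\gamma)|=z(\gamma)=0$, so $\hat\rho_f([\gamma])=\mu_f(\gamma)=0$ for every $[\gamma]\in\hat\pi_1(M^n)$; that is, the Maslov representation of a Morin front is trivial.

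The main obstacle is the identification step $\hat\nu_{j-1}(t^-_j)=\nu_{\mathrm{in}}(\gamma(t^-_j))$: one has to be careful that the continuous propagation used to define $\hat\nu_j$ on $[t^+_j,t^-_{j+1}]$ really coincides with the global field $\nu_{\mathrm{in}}$ along the (connected) regular arc, and that the inward direction associated to the singular component entered at $t^-_j$ is the restriction of the same global $\nu_{\mathrm{in}}$ — this is exactly the content of Proposition~\ref{prop:morin-inward} together with the remark that the inward direction is uniquely defined in a neighborhood of each connected component of $S(f)$, but it deserves to be spelled out so that the sign bookkeeping is unambiguous. Everything else is a direct unwinding of the definition of the zig-zag number.
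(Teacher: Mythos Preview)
Your proposal is correct and follows essentially the same approach as the paper: the corollary is deduced directly from Proposition~\ref{prop:morin-inward}, which supplies a globally defined inward unit normal, and you have simply spelled out in detail why this forces every sign $\varepsilon_j$ in the zig-zag construction to equal $+1$. The paper itself gives no further argument beyond the one-line observation ``since the inward direction is defined globally for a Morin front,'' so your write-up is a faithful (and more explicit) expansion of the intended reasoning.
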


\begin{remark}
 Suppose that $M^n$ and $N^{n+1}$ are both oriented and $f$ is
 co-orientable.
 Then, we can take a globally defined unit normal 
 vector field $\nu$ along $f$ which gives a 
 direct interpretation of the sign $\varepsilon_j$:
 We denote by $O_\alpha^+$ the component of
 $O_{\alpha}\setminus\Gamma_{\alpha}$
 where $\nu$ is compatible with the orientation of $M^n$.
 Then each component $\Gamma_{\alpha}$ of $S(f)$ is called 
 {\em zig\/} if $\nu$ coincides with $\nuin$ on $O_{\alpha}^+$.
 Some global properties of zig-zag numbers are given in
\cite{LLR}.
\end{remark}

%%%%%%%%%%%%%%%%%%%%%%%%%%%%%%%%%%%%%%%%%%%%%%%%%%%%%%%%%%%%%%
\appendix
\section{Criteria of Morin type singularities}
\label{sec:morin}

The {\em $A_k$-Morin singularities\/} are map germs which are
 $\K$-right-left equivalent to
 \begin{equation}\label{eq:morink}
      f(z_1,\ldots,z_n)
       =
      (z_1z_n+\dots+z_{k-1}z_n^{k-1}+z_n^{k+1},z_1,\dots,z_{n-1})
      \qquad (k\leq n)
 \end{equation}
at $\vect{0}$ (\cite{morin}, see also \cite{gg}).
The $A_0$-Morin singularity means
a regular point.
The following assertion holds:

\begin{theorem}\label{thm:morin-cri}
 Assume that $k\leq n$.
 Let $U$ be a domain in $\K^n$, and $f\colon{}U\to \K^n$ a
 $\K$-differentiable map and $p\in S(f)$.
 We assume that $p$ is a corank one singularity.
 Then $f$ at $p$ is $\K$-right-left equivalent to $A_k$-Morin
 singularity if and only if
 \begin{align}
    \label{eq:morin-cond1}
    &\lambda=\lambda'=\cdots=\lambda^{(k-1)}=0,\ \lambda^{(k)}\ne0
    \ \text{at\ }p \quad\text{and}\\
    \label{eq:morin-cond2}
    &\Lambda:=(\lambda,\lambda',\ldots,\lambda^{(k-1)})
    :U\to\K^{k}\quad\text{is non-singular at $p$.}
 \end{align}
 Here, 
 $\lambda=\det(f_{x_1},\dots,f_{x_n})$,
 $(x_1,\dots,x_n)$ is the canonical coordinate system on $U$,
 $\lambda'=\tilde\eta\lambda(=\lambda^{(1)})$,
 $\lambda^{(i)}=\tilde\eta\lambda^{(i-1)}$
 and
 $\tilde\eta$ is the extended null vector field of $f$.
\end{theorem}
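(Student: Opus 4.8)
The two implications are handled separately, and for both we use that, since $p$ is a corank-one singular point with $d\lambda(p)\neq0$, the iterated singular sets $S_1\supset S_2\supset\cdots$ and the vector field $\tilde\eta$ make sense exactly as in Section~\ref{sec:criteria} (the proofs of Lemma~\ref{lem:zero} and of \eqref{eq:lambda-eta} only used $\dimK\ker df_p=1$), and that, by the argument proving Corollary~\ref{cor:actual}, conditions \eqref{eq:morin-cond1}--\eqref{eq:morin-cond2} are equivalent to ``$p$ is $k$-nondegenerate but not $(k+1)$-singular'', a property manifestly invariant under $\K$-right-left equivalence. For \emph{necessity} it then suffices to verify \eqref{eq:morin-cond1}--\eqref{eq:morin-cond2} on the model \eqref{eq:morink}: there $\tilde\eta=\partial_{z_n}$ is an extended null vector field, $\lambda=\pm h_{z_n}$ with $h=z_1z_n+\dots+z_{k-1}z_n^{k-1}+z_n^{k+1}$, so $\lambda^{(j)}=\pm\partial_{z_n}^{\,j+1}h$, and a direct computation gives $\lambda^{(j)}(\vect0)=0$ for $j\le k-1$, $\lambda^{(k)}(\vect0)=\pm(k+1)!\neq0$, and $d\lambda^{(j)}(\vect0)$ a nonzero multiple of $dz_{j+1}$ for $j\le k-2$ and of $dz_n$ for $j=k-1$; hence $\Lambda$ is a submersion at $\vect0$.

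\emph{Sufficiency.} Assume \eqref{eq:morin-cond1}--\eqref{eq:morin-cond2}. As in the proof of Lemma~\ref{lem:adopted-coord}, corank one together with the implicit function theorem lets us choose coordinates $(z_1,\dots,z_n)$ centered at $p$ and a diffeomorphism of the target bringing $f$ to the split form $f=(h(z),z_1,\dots,z_{n-1})$ with $dh(\vect0)=\vect0$; then $\tilde\eta=\partial_{z_n}$ and $\lambda=\pm h_{z_n}$, so the hypotheses say that $h_{z_n}(0,\dots,0,z_n)$ vanishes to order exactly $k$ in $z_n$, while the rank condition translates, via the Malgrange preparation theorem $h_{z_n}=e(z)\bigl(z_n^k+a_{k-1}(z')z_n^{k-1}+\dots+a_0(z')\bigr)$ ($e$ a unit, $z'=(z_1,\dots,z_{n-1})$, $a_j(\vect0)=0$), into the linear independence of $da_0(\vect0),\dots,da_{k-2}(\vect0)$ (the coefficient $a_{k-1}$ being irrelevant). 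Using that independence, one changes the source coordinates so that $z_j:=a_{j-1}(z')$ ($j=1,\dots,k-1$), compensating in the last $n-1$ target coordinates so the split form persists, then removes the $z_n^{k-1}$-term by a Tschirnhaus substitution $z_n\mapsto z_n-a_{k-1}/k$ (again preserving the split form, and harmless for the coordinatization already achieved because $a_{k-1}(\vect0)=0$ — one may need to repeat these changes once), and finally deletes the ``constant of integration'' $g(z')$ in $h=\int_0^{z_n}h_{z_n}\,dt+g(z')$ by subtracting $g$ from the first target coordinate. The outcome is that $f$ is $\K$-right-left equivalent to $\bigl(\int_0^{z_n}e_\circ(z)(t^k+z_{k-1}t^{k-2}+\dots+z_1)\,dt,\ z_1,\dots,z_{n-1}\bigr)$ for some unit $e_\circ$ ($t$ being the integration variable), which up to a linear rescaling of the coordinates is the model \eqref{eq:morink} except for the unit factor $e_\circ$.

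The remaining, and essential, step is to absorb $e_\circ$, i.e.\ to see that the data just extracted — which is precisely Morin's non-degeneracy condition for an $A_k$-singularity — determines $f$ up to $\K$-right-left equivalence. I would conclude by quoting Morin's normal-form theorem \cite{morin} (see also \cite{gg}); a self-contained alternative is a Thom--Levine homotopy argument along the family $f_s=\bigl(\int_0^{z_n}\!\bigl((1-s)+s\,e_\circ\bigr)(t^k+z_{k-1}t^{k-2}+\dots+z_1)\,dt,\ z'\bigr)$, $s\in[0,1]$, all of whose members satisfy the same conditions and whose infinitesimal right-left triviality is established, once more, with the preparation theorem. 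This elimination of the unit, rather than the (routine) coordinate bookkeeping of the previous paragraph, is where the real difficulty lies.
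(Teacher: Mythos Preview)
Your proof is correct and follows essentially the same route as the paper's: reduce to the split form $f=(h,z_1,\dots,z_{n-1})$ using corank one, observe that then $\tilde\eta=\partial_{z_n}$ and $\lambda=\pm h_{z_n}$ so that \eqref{eq:morin-cond1}--\eqref{eq:morin-cond2} become exactly the classical Morin conditions on $h$, and invoke the known normal-form theorem. The paper simply cites \cite[p.~177]{gg} for that last step, whereas you spell out the beginning of the Malgrange-preparation argument before appealing to \cite{morin} (or a Thom--Levine homotopy); this extra detail is fine but not a genuinely different strategy.
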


\begin{proof}
 Since the conditions \eqref{eq:morin-cond1} and \eqref{eq:morin-cond2}
 do not change when changing
 $\lambda$ to $\psi\lambda$, where $\psi(p)\ne0$,
 the conditions \eqref{eq:morin-cond1} and \eqref{eq:morin-cond2}
 are independent on the choice of local coordinate systems
 on the source and target.
 Suppose $f$ has the form 
\begin{equation}\label{eq:form}
    f(z_1,\ldots,z_n)=\bigl(\mu(z_1,\dots,z_n),z_1,\dots,z_{n-1}\bigr), 
\end{equation}
such that
 \begin{itemize}
  \item $\mu'=\cdots=\mu^{(k)}=0$, $\mu^{(k+1)}\ne0$ at $p$
	$('=\partial/\partial z_n)$, and
  \item the map germ of $(\mu',\ldots,\mu^{(k)})$ 
	is nonsingular at $p$.
 \end{itemize}
 Then $f$ is an $A_k$-Morin singularity (see \cite[page
 177]{gg}).
 Though the proof of this fact in \cite{gg}
 is given for $\K=\R$, the argument works also for the case of
 $\K=\C$.
 By \eqref{eq:form}, we have
 $\lambda=\mu'$, and $\eta=\partial_{z_n}$ holds.
 This implies the theorem.
\end{proof}
The same type of assertion as in 
Corollaries~\ref{cor:submanifold1} and \ref{cor:submanifold2}
hold for $A_k$-Morin singularities.
\begin{acknowledgements}
 The authors thank Shyuichi Izumiya and Goo Ishikawa 
 for fruitful discussions and valuable comments.
\end{acknowledgements}


\begin{thebibliography}{11}
\bibitem{A}
	V. I. Arnol'd, 
        {\sc Topological Invariants of Plane Curves and Caustics},  
	University Lecture Series {\bf 5}, Amer. Math. Soc. (1991).
\bibitem{AGV}
	V. I. Arnol'd, S. M. Gusein-Zade and A. N. Varchenko,
	{\sc Singularities of differentiable maps, Vol. $1$},
	Monographs in Math. {\bf 82}, Birkh\"auser (1984).
\bibitem{board}
  J. M. Boardman,
  \newblock{\em Singularities of differentiable maps},
  \newblock Publ.\ Math.\ I.H.E.S. {\bf 33}, (1967), 21--57.
\bibitem{bruce-giblin}
  J. W. Bruce and P. J. Giblin,
  \newblock{\em Curves and singularities},
  \newblock Camb.\ Univ.\ Press (1992).
\bibitem{gg}
  M. Golubitsky and V. Guillemin,
  \newblock{\em Stable mappings and their singularities}, 
  \newblock{Graduate Texts in Math.\ {\bf 14}, Springer-verlag, 1973}.
\bibitem{KN}
    S.\ Kobayashi and K.\ Nomizu,
    \newblock{\em Foundations of Differential Geometry I,}
    \newblock{John Wiley \& Sons Inc., 1963.}
\bibitem{krsuy}
  M. Kokubu, W. Rossman, K. Saji, M. Umehara and K. Yamada,
  \newblock{\em Singularities of flat fronts in hyperbolic $3$-space},
  Pacific J. Math.\ {\bf 221} (2005), no. 2, 303--351.
\bibitem{LLR}
    R.\ Langevin, G.\ Levitt and H.\ Rosenberg,
    {\itshape Classes d'homotopie de surfaces avec rebroussements
    et queues d'aronde dans $\mathbb R^3$},
    Canad.\ J. Math.\ {\bfseries 47} (1995), 544--572.
\bibitem{morin}
  B. Morin,
  {\it Forms canonique des singularit\'es d'une application
       diff\'erentiable},
  C. R. Acad. sci. Paris {\bf 260} (1965), 5662--5665.
\bibitem{suy}
	K. Saji, M. Umehara and K. Yamada,
  {\em The geometry of fronts},
  to appear in Ann. of Math., 
   math.DG/0503236.
\bibitem{zaka}
  V. M. Zakalyukin,
  \newblock{\em 
    Reconstructions of fronts and caustics depending on 
    a parameter and versality of mappings}.
  \newblock J. Sov. Math., {\bf 27} (1984), 2713--2735.
\end{thebibliography}
\end{document}